\newcommand{\Ec}{\mathcal{E}}
\newcommand{\Rc}{\mathcal{R}}
\newcommand{\Sc}{\mathcal{S}}
\newcommand{\Pc}{\mathcal{P}}
\newcommand{\Zc}{\mathcal{Z}}
\newcommand{\Uc}{\mathcal{U}}
\newcommand{\Vc}{\mathcal{V}}
\newcommand{\im}{{\rm Im}}
\DeclareMathOperator{\rank}{rank}
\definecolor{darkviolet}{rgb}{0.58, 0.0, 0.83}
\definecolor{lavender}{rgb}{0.45, 0.31, 0.59}
\newtheorem{lemma}{Lemma}
\newtheorem{remark}{Remark}
\newtheorem{theorem}{Theorem}
\newtheorem{definition}{Definition}
\newtheorem{proposition}{Proposition}
\newtheorem{corollary}{Corollary}
\newcommand{\myend}{\hfill $\blacktriangle$}
\colorlet{inputcolor}{Orchid!30}
\begin{document}
\title{Obtaining Structural Network Controllability\\with Higher-Order Local Dynamics}
\author{Marco Peruzzo, Giacomo Baggio, Francesco Ticozzi
\thanks{The authors are with the Department of Information Engineering, University of Padova, Italy. Emails: \texttt{marco.peruzzo.5@phd.unipd.it}, \texttt{baggio@dei.unipd.it}, \texttt{ticozzi@dei.unipd.it}. }}
\maketitle
\begin{abstract} 
We consider a network of identical, first-order linear systems, and investigate how replacing a subset of the systems composing the network with higher-order ones, either taken to be generic or specifically designed, may affect its controllability. 
After establishing a correspondence between state controllability in networks of first-order systems with output controllability in networks of higher-order systems, we show that adding higher-order dynamics may require significantly fewer subsystem modifications to achieve structural controllability, when compared to first-order heterogeneous subsystems. Furthermore, we characterize the topology of networks (which we call X-networks) in which the introduction of heterogeneous local dynamics is not necessary for structural output controllability, as the latter can be attained by suitable higher-order subsystems with homogeneous internal dynamics.
\end{abstract}
\begin{IEEEkeywords}
Linear network systems, structural controllability, higher-order dynamics
\end{IEEEkeywords}

\vspace{-0.15cm}

\section{Introduction}\label{sec:introduction}

Many natural and engineered dynamical systems can be modeled as networks consisting of a large number of interconnected, simpler dynamical units \cite{MN:03,SB-VL-YM-MC-DH:06,FB:24}. In such systems, the interconnection structure significantly influences the collective behavior of the system and the response to external inputs. As a result, understanding how network topology affects dynamics is fundamental to the analysis and control of these systems. 

Over the past few decades, the analysis of complex network dynamics has gained increasing attention from different scientific communities, ranging from physics and engineering \cite{GP-MA:13,GC-TS-FS-DG-AG-GC:19} to biology and social sciences \cite{TV-KR-LA:06,AP-RT:17}. This growing body of research has deepened our understanding of how the structural properties of networks impact collective phenomena in interconnected systems, with notable implications for synchronization \cite{FD-FB:14}, information transfer \cite{GB-SZ:21},~and~controllability~\cite{LY-BA:2016}.

In particular, the challenge of efficiently controlling large-scale networks has driven renewed interest in the structural controllability analysis of networks governed by linear time-invariant dynamics \cite{YYL-JJS-ALB:11}. Structural controllability is a relaxed notion of controllability that depends only on the sparsity patterns of system matrices, rather than their specific numerical values \cite{CTL:74}. For network systems, it highlights the role of the interconnection structure in determining whether the system state can be steered to any desired configuration via appropriate inputs, regardless of the precise values of the edge weights. Although structural controllability has been extensively studied in control theory \cite{KJR:88, JMD-CC-JV:03,GR-APA-SP:22}, recent works have opened up new areas of investigation. These include: developing strategies to efficiently achieve structural controllability \cite{YYL-JJS-ALB:11}; understanding how key network properties, such as degree distribution and modularity, influence structural controllability \cite{MP-YYL-JJS-ALB:13,GM-LD-GB:14}; and analyzing structural controllability in time-varying settings \cite{AL-SPC-YYL-LW-ALB:17, LL-FL-CA:24} or under constraints on edge weights \cite{YG-AL-LW:21,FL-ASM:19}. Regarding the first area, much of existing research has focused on developing efficient methods to identify a minimal set of driver nodes \cite{YYL-JJS-ALB:11, CC-JMD:13, SP-SK-AAP:15}, or to implement minimal changes to the network topology \cite{WXW-XN-YCL-CG:12, XC-SP-GJP-VMP:18, SM-PC-MNB:20} to guarantee structural controllability.

In this work, we explore an alternative strategy to achieve structural controllability, namely through the introduction of higher-order local dynamics in a subset of network nodes.\footnote{ We stress that, in this paper, the term `higher-order' refers to the number of state variables describing the local dynamics of network nodes, not to the nature of interactions between nodes as in \cite{CB-EG-HAH-MTS:23}.}
More precisely, we consider networks composed of identical first-order systems and replace the internal dynamics of certain nodes with higher-order ones, while preserving the original configuration 
of interconnections between nodes. We assume that a generic linear combination of the internal states of each modified node is measured, and we analyze the structural output controllability \cite{KM-SP:90} of the resulting network system. First, we characterize the topologies of networks which are structurally uncontrollable and can be made structurally {(output) controllable 
by replacing the internal dynamics of some nodes with homogeneous higher-order ones. 
 The existence of such systems, which we name X-networks, already clearly demonstrates the potential of higher-order systems, as heterogeneous local dynamics is needed to achieve structural controllability using first-order systems \cite{CC-AK:19}. Second, we identify a class of networks for which higher-order homogeneous dynamics cannot lead to structural {output} controllability, referred to as Y-networks. We then show that, for Y-networks, using heterogeneous higher-order local dynamics provides an advantage in the minimal number of modified nodes, compared to employing first-order heterogeneous dynamics. Combining the results, we have that structural output controllability can be attained by introducing a reduced number of homogeneous and of heterogeneous higher-order dynamics. As a by-product of our analysis, we establish alternative graphical characterizations of structurally uncontrollable network topologies and present conditions for output controllability of linear systems based on the PBH test, which complement the results in \cite{BD-JL-MJ:23}.

Finally, we mention that a closely related work to ours is \cite{CC:19}, where the author provides conditions for the structural controllability of interconnected subsystems with potentially higher-order internal dynamics. However, in \cite{CC:19} the subsystems are assumed to be single-input single-output, a condition we do not impose in our work.
Another related work is \cite{MJ-JL:22}, which considers  multi-input multi-output higher-order subsystems, provides conditions for the structural controllability of the network, and discusses  how to improve controllability via topology design. We emphasize that, in  our work, the network topology remains fixed, and the designer's degrees of freedom are limited to selecting which first-order subsystems to replace with higher-order ones. Furthermore  and more importantly, both works \cite{CC:19, MJ-JL:22} do not associate a (scalar) output equation with the nodes of a subsystem, as we do here, and focus on the structural (state) controllability~of~the~global~network.

The rest of the paper is organized as follows. In Sec.~\ref{sec:models-and-problems}, we introduce the models of both the original and extended network systems, the latter incorporating higher-order nodes, and formally define the problem addressed in this study. Sec.~\ref{sec:output-controllability-modified} presents some instrumental results on the output structural controllability of the extended system. The main findings of the paper, namely the characterization of the controllability advantages provided by higher-order dynamics, are discussed in Sec.~\ref{sec:advantages}. Finally, in Sec.~\ref{sec:conclusion} we present some concluding remarks and outline potential directions for future research.

\noindent \textbf{Notation.} We denote with $\mathbb{R}^{n\times m}$ the set of $n\times m$ matrices with real entries. 
We let $A^\top$ denote the transpose of a matrix $A\in \mathbb{R}^{n\times m}$ and $\ker(A)$ be its kernel.
The symbol $I_n$ stands for the identity matrix in $\mathbb{R}^{n\times n}$, we will drop the subscript $n$ when the dimension is clear from the context. Given a set of vectors \(\{w_i\}\) in \(\mathbb{R}^n\), the notation \(\operatorname{span}\{w_1, \dots, w_n\}\) represents their linear span. Finally, we use calligraphic letters to denote sets, and for a set \(\mathcal{Z}\), the symbol \(|\mathcal{Z}|\) indicate its cardinality.

\section{Models and problem}\label{sec:models-and-problems}

\subsection{Networks of identical first-order systems}

We consider a network system composed of $n$ interconnected subsystems with identical (homogeneous) first-order internal dynamics. The interconnections among these subsystems are represented by a directed graph ${\rm G}_0=(\mathcal{V},{\rm \Ec}_v)$, where $\mathcal{V}$ and ${\Ec}_v$ are the set of nodes and edges of ${\rm G}_0$. If a subsystem is connected to another, it influences its dynamics with a linear function of its state acting as an input for the second. The network is also influenced by an external control input $u(t)\in\mathbb{R}^m$.
The $i$-th subsystem is then governed by the linear and time-invariant dynamics
\begin{align}\label{eq:subsys}
    \dot{x}_i(t)= a x_i(t) + \sum_{j\ne i} a_{ij}x_{j}(t) + b_i u(t)
\end{align}
where $x_i(t)\in\mathbb{R}$ is the state of the $i$-th subsystem,  $a\in\mathbb{R}$ defines the internal dynamics of the subsystem, $a_{ij}=0$ if $(j,i)\not\in\mathcal{E}_v$, and the local input matrix $b_i\in\mathbb{R}^{1\times m}$ determines whether the external input directly affects the subsystem. 

We can express the network dynamics in standard vector form by introducing also an output equation as follows
\begin{align}\label{eq:netsys}
\Sigma: \ \begin{cases}
    \dot{x}(t)=
     A x(t) + B u(t), & \\ 
    y(t) =  C x(t) & 
\end{cases}
\end{align}
where $x(t)=\begin{bmatrix} x_1(t) \cdots\,  x_n(t)\end{bmatrix}^{\top}\in\mathbb{R}^n$ and $y(t)=\begin{bmatrix} y_1(t) \cdots\,  y_p(t)\end{bmatrix}^{\top}\in\mathbb{R}^p$ denote the state and output of the network system, respectively. The diagonal entries of  $A\in\mathbb{R}^{n\times n}$ describe the internal dynamics of subsystems while the off-diagonal entries of $A$ capture the interconnections among subsystems, namely $A_{ii}=a$ and $A_{ij}=a_{ij}$ for $i\ne j$. The input matrix $B$ is given by $B = \begin{bmatrix}b_1^\top & \cdots & b_n^\top\end{bmatrix}^\top \in\mathbb{R}^{n\times m}$. 

Throughout the paper, we assume that $p=n$ and the output matrix $C$ is the identity matrix, implying that all states are directly measurable. Furthermore, we assume that subsystems in $\Sigma $ have trivial internal dynamics\footnote{{To simplify the analysis, we assume trivial internal dynamics for the network subsystems. However, our results extend to networks of identical first-order systems with non-trivial internal dynamics ($a \neq 0$). 
 This follows from the fact that the controllability matrices of \eqref{eq:netsys} with $a\ne 0$ and $a=0$ have the same rank for every choice of the free parameters, see e.g.~\cite{CC-AK:19}.}} ($a=0$) and we look at \eqref{eq:netsys} as a \textit{structured system}, that is, the interconnection weights $a_{ij}$, with $(j,i)\in\mathcal{E}_v$, $i\ne j$, and the non-zero entries of $B$ can be chosen as arbitrary real numbers.

\subsection{Structural controllability with extended internal dynamics}\label{sec:hod}

Our aim is to improve the output controllability of the structured network system \eqref{eq:netsys}.  We recall that a structured linear system $\Sigma$ as in \eqref{eq:netsys} is (i) \emph{structurally controllable} if there exists a numerical realization of $A$, $B$ such that the controllability matrix $\Rc = [B\ AB\ \cdots \ A^{n-1}B]$ has full row-rank, and (ii) \emph{structurally output controllable} if there exists a numerical realization of $A$, $B$, $C$ such that the output controllability matrix $\Rc_o = C\Rc$ has full row-rank. 

In some cases we consider only $A$ and $B$ to be structured matrices, while $C$ is taken to be a fixed matrix. In particular, for \eqref{eq:netsys} with $C=I,$ structural output controllability is equivalent to structural controllability. 

While prior studies have approached the problem of improving structural (output) controllability by optimizing input selection \cite{ YYL-JJS-ALB:11,CC-JMD:13,SP-SK-AAP:15} or adjusting the network topology \cite{WXW-XN-YCL-CG:12,XC-SP-GJP-VMP:18,SM-PC-MNB:20}, our approach is to {\em attain controllability by modifying the internal dynamics of the~individual~subsystems}~\eqref{eq:subsys}. 

Specifically, we shall consider replacing the identical, first-order (some or all) subsystem dynamics of \eqref{eq:netsys} with:
\begin{itemize}
\item {\em Heterogeneous dynamics}, allowing different subsystems to feature different internal dynamics; 
\item{\em Higher-order dynamics}, allowing subsystems to have internal dynamics of order greater than one.
\end{itemize}
Namely, we replace \eqref{eq:subsys} with
\begin{align}\label{eq:subsys-exp}
    \dot{\hat{x}}_i(t)= \hat{a}_i x_i(t) + \sum_{j\ne i} \hat{a}_{ij}\hat{x}_{j}(t) + \hat{b}_i u(t)
\end{align}
where $\hat{x}_i(t)\in\mathbb{R}^{\hat{n}_i}$ is the state of the $i$-th modified subsystem with $\hat{n}_i\ge 1$ being its order. The matrix $\hat{a}_i\in\mathbb{R}^{\hat{n}_i\times \hat{n}_i}$ describes the new (potentially higher-order) internal dynamics of the $i$-th subsystem, while $\hat{a}_{ij}\in\mathbb{R}^{\hat{n}_i\times \hat{n}_j}$ the new interconnections weights which satisfy the original locality constraints, that is $\hat{a}_{ij}=0$ if $(j,i)\not\in\mathcal{E}_v$. Lastly, $\hat{b}_i\in\mathbb{R}^{\hat{n}_i\times m}$ denotes the new local input matrix, which satisfies $\hat{b}_i= 0$ if $b_i=0$.

After modifying the original subsystems \eqref{eq:subsys} as in \eqref{eq:subsys-exp}, the overall network dynamics can be expressed as
\begin{align}\label{eq:netsys-exp}
\hat{\Sigma}: \ \begin{cases}
    \dot{\hat{x}}(t)=
     \hat{A} \hat{x}(t) + \hat{B} u(t), & \\ 
    \hat{y}(t) =  \hat{C} \hat{x}(t) & 
\end{cases}
\end{align}
where $\hat{x}(t)=\begin{bmatrix} \hat{x}^{{\top}}_1(t) \cdots\,  \hat{x}^{{\top}}_n(t)\end{bmatrix}^{\top}\in\mathbb{R}^{\hat{n}}$, $\hat{y}(t)=\begin{bmatrix} \hat{y}_1(t) \cdots\,  \hat{y}_n(t)\end{bmatrix}^{\top}\in\mathbb{R}^{{n}}$,  $\hat{n}=\sum_{i=1}^n \hat{n}_i$, denote the state and output of the extended network, respectively. The state matrix $\hat{A}\in\mathbb{R}^{\hat{n}\times \hat{n}}$, input matrix $\hat{B}\in\mathbb{R}^{\hat{n}\times m}$ and output matrix $\hat{C}\in\mathbb{R}^{n\times \hat{n}}$ have the following block structure
\begin{align*}
    \hat{A} = \begin{bmatrix}\hat{a}_{1} &\cdots& \hat{a}_{1n} \\ \vdots &\ddots& \vdots \\ \hat{a}_{n1} &\cdots& \hat{a}_{n}\end{bmatrix}, \   \hat{B} = \begin{bmatrix} \hat{b}_1 \\ \vdots\\ \hat{b}_{n}\end{bmatrix}, \  \hat{C} = \begin{bmatrix}\hat{c}_{1} &\cdots& 0 \\ \vdots &\ddots& \vdots \\ 0 &\cdots& \hat{c}_{n}\end{bmatrix},
\end{align*}
where $\hat{c}_i\in\mathbb{R}^{1\times \hat{n}_i}$. 
From now on, we consider $\hat{\Sigma}$ as a \textit{structured system} in which a key constraint is imposed to preserve the nature of the interconnections of $\Sigma$: $\hat{a}_{ij}$, $i\ne j$, is allowed to be a generic {\em 
matrix} {\em only if} ${a}_{ij}\ne0$. This implies that the communication between interconnected subsystems is possible only if it was already allowed in model \eqref{eq:netsys}.
The remaining entries of the state matrix, described by $\hat{a}_i$, are either generic or \textit{all} fixed to zero. In the former case, subsystems may have different internal dynamics and are therefore \textit{heterogeneous}, in the latter case all the network subsystems have trivial internal dynamics and are said to be \textit{homogeneous}. 
{Moreover, we let $\hat{b}_{i,s}$, ${b}_{i,s}$ be respectively the $s$-th column of $\hat{b}_i$, $b_i$ and we let all the entries of  $\hat{b}_{i,s}$ be generic only if ${b}_{i,s}\neq 0$. } This ensures that only the input components that affected directly a node $i$ in the original system are allowed to enter the dynamics of the extended $i$-th subsystem.

Similarly, we assume that $\hat{c}_i$ have generic real entries if $\hat{n}_i>1$, meaning that the output $\hat{y}_i(t)$ is an arbitrary linear combination of the components of the internal state $\hat{x}_i(t)$, and $\hat{c}_i=1$ otherwise.

\begin{figure}
    \centering
    {\resizebox{0.425\textwidth}{!}{
	\begin{tikzpicture}[shorten >=1pt, auto, ultra thick,
		node_style/.style={draw, circle,thick, fill=white, minimum size=0.5cm,font=\footnotesize},every edge/.append style = {thick}]

		\begin{scope}
            \node[node_style,fill=inputcolor,inner sep=0.01cm] (u) at (0,1) {$u$};
			\node[node_style,inner sep=0.01cm] (1) at (0,0) {$x_1$};
			\node[node_style,inner sep=0.01cm] (3) at (0.5,-1) {$x_3$};
			\node[node_style,inner sep=0.01cm] (2) at (-0.5,-1) {$x_2$};
			\draw[-stealth,semithick] (1) -- (2);
			\draw[-stealth,semithick] (1) -- (3);
            \draw[-stealth,semithick,line join=round,
             decorate, decoration={
                zigzag,
                segment length=4,
                amplitude=.9,
                post=lineto,
                post length=5pt
            }] (u) -- (1);
            
            
			\node[text width=0.4cm] at (0,-2)  {\small(a)} ;
		\end{scope}

                        
            
   

		\begin{scope}[xshift=80]
			\draw[draw=red,line width=0.01,fill=BrickRed!10] (0.5,-1) ellipse (0.4 and 0.4);

			\node[node_style,inner sep=0.01cm,inner sep=0.01cm] (1) at (0,0) {$\hat{x}_1$};
			\node[node_style,inner sep=0.01cm,inner sep=0.01cm] (3) at (0.5,-1) {$\hat{x}_3$};
			\node[node_style,inner sep=0.01cm,inner sep=0.01cm] (2) at (-0.5,-1) {$\hat{x}_2$};
            \node[node_style,inner sep=0.01cm,,inner sep=0.01cm,fill=inputcolor] (u) at (0,1) {$u$};
            \draw[-stealth,semithick,line join=round,
            decorate, decoration={
                zigzag,
                segment length=4,
                amplitude=.9,post=lineto,
                post length=5pt
            }] (u) -- (1);
			\draw[-stealth,semithick] (1) -- (2);
			\draw[-stealth,semithick] (1) -- (3);
			\draw[-stealth,semithick] (3) to [in=210-180+50,out=160-180+50,loop,looseness=5] (3);

			\node[text width=0.4cm] at (0,-2)  {\small(b)} ;
		\end{scope}

		\begin{scope}[xshift=170]
			
			\draw[draw=red,line width=0.01,fill=BrickRed!10] (1.05,-0.65) ellipse (0.9 and 0.9);
			   
			\node[node_style,inner sep=0.01cm] (1) at (0,0) {$\hat{x}_1$};
			\node[node_style,inner sep=0.01cm] (3) at (0.55,-1) {$\hat{x}_3$};
			\node[node_style,inner sep=0.01cm] (2) at (-0.5,-1) {$\hat{x}_2$};
			\node[node_style,inner sep=0.01cm] (4) at (1,-0.1) {$\hat{x}_4$};
			\node[node_style,inner sep=0.01cm] (5) at (1.5,-1) {$\hat{x}_5$};
			\node[node_style,fill=inputcolor,inner sep=0.01cm] (u) at (0,1) {${u}$};
            \draw[-stealth,semithick,line join=round,
             decorate, decoration={
                zigzag,
                segment length=4,
                amplitude=.9,
                post=lineto,
                post length=5pt
            }] (u) -- (1);
			\draw[-stealth,semithick] (1) -- (3);
			\draw[-stealth,semithick] (3) -- (4);
            \draw[-stealth,semithick] (4) -- (3);
            
			\draw[-stealth,semithick] (4) -- (5);
            \draw[-stealth,semithick] (5) -- (4);
			\draw[-stealth,semithick] (5) -- (3);
            \draw[-stealth,semithick] (3) -- (5);
            
			\draw[-stealth,semithick] (3) -- (2);
			   
			\node[text width=0.4cm] at (0.5,-2) {\small(c)} ;

            \draw[-stealth,semithick] (3) to [out=210-70,in=160-70,loop,looseness=5] (3);
            \draw[-stealth,semithick] (4) to [out=210-50,in=160-50,loop,looseness=5] (4);
            \draw[-stealth,semithick] (5) to [in=210-180+70,out=160-180+70,loop,looseness=5] (5);
            
		\end{scope}

	\end{tikzpicture}
}}\vspace{-0.25cm}
    \caption{Example of original network (a). Node 3 of (a) is replaced by a subsystem with first-order heterogeneous dynamics (b). Node 3 of (a) is replaced by a subsystem with higher-order heterogeneous dynamics (c). The extended 
    subsystems are marked in red. All the edges have generic weights. The input node is highlighted in violet.}
    \label{fig:lift_ex}
    \vspace{-0.5cm}
\end{figure}
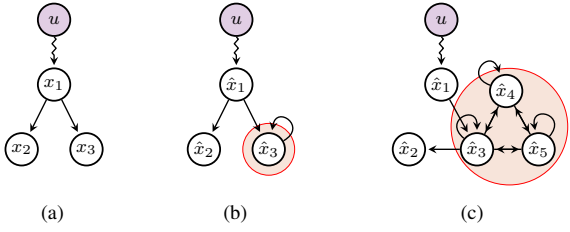

Figure~\ref{fig:lift_ex} illustrates two simple examples of construction of extended systems as in \eqref{eq:netsys-exp}.

In the following sections, we evaluate and quantify the advantages of employing heterogeneous and/or higher-order internal dynamics by comparing the structural output controllability of $\Sigma$ and $\hat{\Sigma}$. 
Notice that the structural output controllability of the former is equivalent to its structural (state) controllability. With our definitions, the dimension of the output for the extended system $\hat{\Sigma}$ matches the one of the original one $\Sigma$: in some sense, the output controllability of $\hat{\Sigma}$ is comparable to the controllability of ${\Sigma}$.

\begin{remark}
The construction of the extended system \eqref{eq:netsys-exp} shares similarities with the notion of lift of a graph  introduced for Markov chains in \cite{PD-SH:00,AST:21}, as both involve an expanded representation of the original network dynamics. However, a key distinction lies in how nodes without self-loops are treated. In the standard definition of lift of a graph, if a node in the original network does not have a self-loop, its lifted version cannot exhibit non-trivial internal dynamics, i.e., internal interconnections or self-loops.
\end{remark}

\section{Structural output controllability analysis of the extended system}\label{sec:output-controllability-modified}
\subsection{Graph-theoretic representations and structural controllability}\label{sec:graphlift}

Throughout the paper, we adopt a mixed graph-theoretic and algebraic approach to investigate structural output controllability of the extended system. To leverage existing graph-theoretic conditions for structural controllability, we need a suitable representation of the~original~and~modified~systems.

 In this subsection, we consider the system $\Sigma$ in \eqref{eq:netsys}, but all definitions and results apply to the extended system $\hat{\Sigma}$ in \eqref{eq:netsys-exp}, up to notational changes.

To $\Sigma$ we associate the directed graph ${\rm G}(\Sigma)={({\cal W},\Ec)}$, where ${\cal W}=\Vc\,\cup\, \Uc$. Here $\mathcal{V}$ is the set of state nodes, node $x_i\in \Vc$ is associated with the $i$-th entry of the state vector of $\Sigma$, $\Uc$ is the set of input nodes, and node $u_i\in\Uc$ is associated to the $i$-th component of the input of $\Sigma$. As edge set we consider ${\Ec}={\rm \Ec}_v\cup{\rm \Ec}_u$ where ${\Ec}_u= \{(u_i,x_j)\,|\, u_i\in \mathcal{U}, x_j\in {\mathcal{V}}, B_{ji}\neq 0 \}$ and ${\Ec}_v= \{(x_i,x_j)\,|\, x_i,x_j\in {\mathcal{V}}, A_{ji}\neq 0 \}$ and $A$, $B$ are the state, input matrices of $\Sigma$. It is possible to consider ${\rm G}({\Sigma})$ as composed by $n$ subgraphs describing the subsystems of $\Sigma$.
The $i$-th induced subgraph \cite{FB:24} is ${\rm G}_i(\Sigma)=(\Vc_i\cup {\Uc}, {{ \Ec}}_i)$, where the nodes in $\mathcal{{V}}_i$ are associated with the state of the $i$-th subsystem. Notice that $\Vc_i\cap \Vc_j=0$ and ${\Ec}_i\cap { \Ec}_j=0$, $\forall i\neq j$. 

To make the paper self-contained we recall necessary and sufficient conditions for structural controllability. 

 Some preliminary definitions  are in order. 
 {A {\em stem}  is a sequence of distinct nodes
$\{v_1,\dots,v_\ell\}\subseteq {\cal W}$ such that $(v_k,v_{k+1})\in{\Ec}$ for all $ k\in\{1,\dots,\ell-1\}$ with $v_1 = u_i\in{\cal U}$ and $v_\ell = w_j\in{\cal V}$.  
}An {\textit{elementary cycle} (or simply a cycle, with a slight abuse of terminology)  is a a sequence of distinct nodes
$\{v_1,\dots,v_\ell\}$ such that $(v_k,v_{k+1})\in{\Ec}$ for all $ k\in\{1,\dots,\ell-1\}$, and $v_1=v_\ell.$}

A set of nodes $\cal S$ is \textit{covered} (or spanned) by a subgraph ${\rm G}'(\Sigma)=({\cal W}',\Ec'$) of ${\rm G}(\Sigma)=({\cal W}, {\cal E})$ if ${\cal S}\subseteq$ ${\cal W}'$, and a graph is covered by a subgraph if the subgraph and the graph have the same vertex set. 

The main class of systems that will be considered in the following of the paper is that of \textit{input-accessible systems}.  A system is \textit{input-accessible} if every state vertex in $\mathrm{G}(\Sigma)$ is accessible, i.e., it is the end vertex of a stem. For these systems, the following theorem \cite{SH:1980,JMD-CC-JV:03} provides a graph-theoretic way to check the generic dimension of the controllable subspace and structural controllability.
\begin{theorem}\label{thm:gen_dim}
   The generic dimension of the controllable subspace of an input-accessible, structured system $\Sigma$ equals the maximum size of a set $\Zc \subseteq \mathcal{V}$ that can be covered by vertex-disjoint stems and elementary cycles in ${\rm G}(\Sigma)$. The system is structurally controllable if and only if $\Zc=\Vc$.
\end{theorem}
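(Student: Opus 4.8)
The plan is to prove Theorem~\ref{thm:gen_dim} by connecting the combinatorial quantity (maximum number of state nodes coverable by vertex-disjoint stems and cycles) to the generic rank of the controllability matrix $\Rc = [B\ AB\ \cdots\ A^{n-1}B]$. I would organize the argument around the observation that, for a structured system, the generic rank of $\Rc$ equals the maximum over numerical realizations, and this generic value admits a combinatorial characterization via the structure of the bipartite/directed graph $\mathrm{G}(\Sigma)$. Concretely, I would invoke the standard fact that the generic rank of a structured matrix equals its \emph{structural rank} (the maximum number of nonzero entries no two of which lie in the same row or column, i.e.\ a maximum matching in the associated bipartite graph), and then translate this matching condition into the geometric language of disjoint stems and cycles.

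First I would set up the algebraic side. The generic dimension of the controllable subspace is the generic rank of $\Rc$; equivalently, by the Popov--Belevitch--Hautus perspective or by the Cayley--Hamilton reduction, it is the dimension of the smallest $A$-invariant subspace containing the columns of $B$. For a structured (input-accessible) system, I would argue that this generic rank equals the maximal size of a set $\Zc\subseteq\Vc$ that is both \emph{accessible} from the inputs and \emph{linked} to the inputs by vertex-disjoint paths and cycles. The two ingredients here are the classical results: (i) the generic rank of $\Rc$ equals the structural rank of the pencil $[A-sI\ \ B]$ combined with accessibility, and (ii) the structural rank of such a pencil is computed by maximum matchings, which in graph terms correspond exactly to collections of vertex-disjoint stems (paths from an input) and elementary cycles. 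I would cite \cite{SH:1980,JMD-CC-JV:03} for the precise equivalence rather than rederiving it.

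The key technical step is the bijection between a maximum matching in the relevant bipartite structure and a maximum vertex-disjoint union of stems and cycles covering $\Zc$. Here the edges of $A$ matched to distinct rows/columns decompose the selected subgraph into paths and cycles; the paths must originate at input nodes (accounting for the $B$ block of the pencil), giving stems, while closed components give elementary cycles. I would show that a matching saturating a node set $\Zc$ is equivalent to covering $\Zc$ by such disjoint stems and cycles, so the maximum matching size equals the maximum $|\Zc|$. Input-accessibility is what guarantees that every node can in principle be reached, removing the obstruction where a node is matchable in $A$ but disconnected from all inputs, and it is precisely this hypothesis that makes the matching count coincide with the controllable-subspace dimension rather than overcounting.

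The main obstacle I anticipate is rigorously justifying that the \emph{generic} rank of $\Rc$ (a matrix of high-degree polynomials in the free parameters, with massive cancellation possible among the products $A^k B$) equals this first-order structural invariant, rather than being strictly smaller on every realization due to algebraic dependencies. The standard resolution is to avoid reasoning about $\Rc$ directly and instead pass to the Hautus/pencil characterization $\mathrm{rank}[A-sI\ \ B]$, whose generic rank is genuinely a structural-rank (matching) quantity free of such cancellation subtleties; one then invokes the equivalence, for input-accessible systems, between the generic rank of the pencil and the generic dimension of the controllable subspace. I would therefore lean on the cited prior results \cite{SH:1980,JMD-CC-JV:03} to supply this equivalence, and devote the explicit argument to the cleaner combinatorial translation between matchings and disjoint stems/cycles, which is where the stated form of the theorem is actually established.
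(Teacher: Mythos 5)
The paper does not actually prove Theorem~\ref{thm:gen_dim}: it is recalled as a known result and attributed directly to \cite{SH:1980,JMD-CC-JV:03}. Your outline reproduces the standard argument underlying those references---pass from the generic rank of $\Rc$ to a structural-rank/matching quantity on the pencil and translate matchings into vertex-disjoint stems and elementary cycles, with input-accessibility ruling out unreachable matchable nodes---and you correctly flag the genericity-of-$\Rc$ cancellation issue as the step that must be delegated to the cited literature, so your treatment is consistent with (and more detailed than) the paper's.
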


\subsection{Structural output controllability of extended systems}
In this subsection, employing the tools introduced from the structural controllability analysis, we present some results on the structural output controllability of the extended system \eqref{eq:netsys-exp} which will be instrumental in the subsequent analysis. 
We start with a preliminary lemma that restricts the class of systems which can benefit from introducing higher-order dynamics.
\begin{lemma}
    The extended system $\hat{\Sigma}$ in \eqref{eq:netsys-exp} is structurally output controllable only if the corresponding original network $\Sigma$ in \eqref{eq:netsys} is input-accessible.
\end{lemma}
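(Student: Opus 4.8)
The plan is to prove the contrapositive: assuming the original network $\Sigma$ is \emph{not} input-accessible, I will show that the extended system $\hat{\Sigma}$ fails to be structurally output controllable for every admissible numerical realization. Since $\Sigma$ is not input-accessible, by definition there exists a state node $x_k\in\mathcal{V}$ that is not the end vertex of any stem, i.e.\ no directed path in $\mathrm{G}(\Sigma)$ joins an input node in $\mathcal{U}$ to $x_k$.

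The first step is to transfer this inaccessibility from $\mathrm{G}(\Sigma)$ to $\mathrm{G}(\hat{\Sigma})$. Here I would exploit the locality constraints defining $\hat{\Sigma}$: an edge from (any node of) block $j$ to (any node of) block $i$ in $\mathrm{G}(\hat{\Sigma})$ can be present only if $\hat{a}_{ij}\neq 0$, which in turn is allowed only when $a_{ij}\neq 0$, that is, only when $(x_j,x_i)\in\mathcal{E}_v$; likewise an input edge into block $i$ requires $\hat{b}_i\neq 0$, hence $b_i\neq 0$. Collapsing each block to a single super-node therefore recovers exactly the edge pattern of $\mathrm{G}(\Sigma)$, so the set of blocks reachable from the inputs in $\mathrm{G}(\hat{\Sigma})$ corresponds precisely to the set of state nodes reachable from the inputs in $\mathrm{G}(\Sigma)$. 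In particular, no node of block $k$ is reachable from any input in $\mathrm{G}(\hat{\Sigma})$.

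The second step translates this graph-theoretic inaccessibility into an algebraic statement about $\hat{\mathcal{R}}=[\hat{B}\ \hat{A}\hat{B}\ \cdots\ \hat{A}^{\hat{n}-1}\hat{B}]$. The entry of $\hat{A}^{\ell}\hat{B}$ in a row belonging to block $k$ and input column $s$ equals a weighted sum over directed walks of length $\ell+1$ from $u_s$ to that row-node; since no such walk exists, this entry vanishes for all $\ell$. Consequently every row of $\hat{\mathcal{R}}$ indexed by block $k$ is identically zero, and this holds for \emph{every} choice of the free parameters. Finally, because $\hat{C}$ is block diagonal, the $k$-th row of the output controllability matrix $\hat{C}\hat{\mathcal{R}}$ equals $\hat{c}_k$ applied to the block-$k$ rows of $\hat{\mathcal{R}}$, which are zero; hence this row of $\hat{C}\hat{\mathcal{R}}$ vanishes. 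Thus $\hat{C}\hat{\mathcal{R}}$ has rank at most $n-1<n$ for every realization, so $\hat{\Sigma}$ is not structurally output controllable, establishing the contrapositive.

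I expect the main obstacle to be the first step: rigorously arguing that block-level reachability in $\mathrm{G}(\hat{\Sigma})$ coincides with node-level reachability in $\mathrm{G}(\Sigma)$. One must check that expanding a node into a higher-order block may create internal edges and self-loops but, by the locality constraint $\hat{a}_{ij}=0$ whenever $a_{ij}=0$ (and the analogous constraint on $\hat{b}_i$), cannot introduce any new inter-block or input connection that would render block $k$ accessible. The remaining steps—the walk-sum interpretation of the entries of $\hat{A}^{\ell}\hat{B}$ and the block-diagonal structure of $\hat{C}$—are routine.
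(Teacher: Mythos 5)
Your proof is correct and rests on the same key observation as the paper's: the locality constraints $\hat{a}_{ij}=0$ whenever $a_{ij}=0$ and $\hat{b}_i=0$ whenever $b_i=0$ force the inaccessible part of $\Sigma$ to remain inaccessible in $\hat{\Sigma}$, so the corresponding rows of the output controllability matrix vanish for every realization. The only difference is presentational — the paper reorders the state basis to exhibit a block-triangular structure of $\hat{A}$, $\hat{B}$, $\hat{C}$ and reads off the zero rows of $\hat{C}\hat{\Rc}$ directly, whereas you obtain the same zero rows via the walk-sum expansion of the entries of $\hat{A}^{\ell}\hat{B}$ — and the obstacle you flag in your first step is resolved exactly as you anticipate.
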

\begin{proof}
     Let $\Vc_{{a}}$ be the set of accessible state vertices in ${\rm G}(\Sigma)$ and ${\Vc}_{\bar{a}}=\Vc\setminus\Vc_{{a}}$. 
     By contradiction, assume that $\Vc_{\bar{a}}$ is not empty. Without loss of generality, it is possible to choose a basis for the state space of $\Sigma$ such that ${x}^\top=[{x}_a^\top \ {x}_{\bar{a}}^\top]$ where ${x}_a, {x}_{\bar{a}}$ are the entries of $x$ associated to nodes in ${\Vc}_{{a}}$, ${\Vc}_{\bar{a}}$, 
     respectively. Then, ${a}_{ij}=0$, ${b}_{i}=0$ $\forall i,j$ such that $x_j\in\Vc_{{a}}$ and $x_i\in\Vc_{\bar{a}}$ to guarantee that the nodes $x_i\in\Vc_{\bar{a}}$ are not the end vertices of any stem (and therefore input-accessible). This implies $\hat{a}_{ij}=0$, $\hat{b}_{i}=0$ $\forall i,j$ such that $x_j\in\Vc_{{a}}$ and $x_i\in\Vc_{\bar{a}}$ for the constraints on the structure of $\hat{\Sigma}$. Partitioning the state vector of $\hat{\Sigma}$ accordingly to $x$, i.e.~$\hat{x}^\top=[\hat{x}_a^\top \ \hat{x}^\top_{\bar{a}}]$, the state, input and output matrices of the modified network have the following structure
    \begin{align*}
       \hat{A}=\begin{bmatrix}
           \hat{A}_a & *\\
           0 & \hat{A}_{\bar{a}}
       \end{bmatrix}, \quad \hat{B}=\begin{bmatrix}
            \hat{B}_a\\
            0
        \end{bmatrix}, \quad \hat{C}=\begin{bmatrix}
            \hat{C}_a & 0 \\
            0 & \hat{C}_{\bar{a}}
            \end{bmatrix}.
    \end{align*}
    The output controllability matrix is
    \begin{align*}
        \hat{\Rc}_o=\begin{bmatrix}
            \hat{C}_a\hat{B}_a & \hat{C}_a\hat{A}_a \hat{B}_a& \dots & \hat{C}_a\hat{A}^{\hat{n}-1}_a \hat{B}_a\\
            0 & 0 & \dots & 0
        \end{bmatrix}
    \end{align*}
    which is not of full row-rank independently of the choice of the parameters of the system. The system $\hat{\Sigma}$ is not structurally output controllable, which leads to the contradiction.
\end{proof}

We remark that input accessibility is not a restrictive assumption to our aim, as it is necessary also for the controllability of the original system.
For this reason, in the rest of the paper we focus on input-accessible network systems that are not structurally controllable. Regarding the output controllability of $\hat{\Sigma},$ the following basic characterizations hold.
\begin{lemma}\label{lem:str_out_ctrb} Consider the extended system $\hat{\Sigma}$ in \eqref{eq:netsys-exp}. The following hold:
    \begin{enumerate}
        \item If $\hat{\Sigma}$ is structurally controllable then it is structurally output controllable;
        \item $\hat{\Sigma}$ is  structurally output controllable only if $d_c(\hat{\Sigma})\geq n$, where $d_c(\hat{\Sigma})$ denotes the generic dimension of the controllability subspace of $\hat{\Sigma}$. 
    \end{enumerate}
\end{lemma}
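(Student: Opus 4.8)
The plan is to prove the two claims of Lemma~\ref{lem:str_out_ctrb} separately, using the definitions of structural controllability and structural output controllability stated in Sec.~\ref{sec:hod}, together with Theorem~\ref{thm:gen_dim}.

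For part (1), I would argue directly from the two matrix-rank definitions. Recall that $\hat{\Sigma}$ is structurally controllable if there exists a realization of $\hat{A},\hat{B}$ for which the controllability matrix $\hat{\Rc}=[\hat{B}\ \hat{A}\hat{B}\ \cdots\ \hat{A}^{\hat{n}-1}\hat{B}]$ has full row-rank $\hat{n}$, and structurally output controllable if there is a realization for which $\hat{\Rc}_o=\hat{C}\hat{\Rc}$ has full row-rank $n$. The key step is to observe that $\rank(\hat{C}\hat{\Rc})\le\min\{\rank(\hat{C}),\rank(\hat{\Rc})\}$, so to get full output rank we need $\hat{C}$ to have generic rank $n$; this is indeed the case here because $\hat{C}$ is block-diagonal with blocks $\hat{c}_i$ of size $1\times\hat{n}_i$ whose entries are either generic (if $\hat{n}_i>1$) or equal to $1$, so each block is generically nonzero and $\hat{C}$ has full row-rank $n$ for generic parameters. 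Then I would invoke genericity: if $\hat{\Rc}$ attains full row-rank $\hat{n}$ for some realization, then $\rank(\hat{\Rc})=\hat{n}$ on a Zariski-dense open set of parameters, and likewise $\rank(\hat{C})=n$ on a dense open set; the intersection of two dense open sets is nonempty, so one can pick parameters making both hold simultaneously. For such parameters $\hat{C}$ is surjective and $\hat{\Rc}$ is surjective onto $\Rb^{\hat n}$, hence $\hat{C}\hat{\Rc}$ is surjective onto $\Rb^{n}$, i.e.\ has full row-rank $n$. This yields structural output controllability.

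For part (2), I would prove the contrapositive of the necessity claim, namely that if $d_c(\hat{\Sigma})<n$ then $\hat{\Sigma}$ is not structurally output controllable. The definition gives $d_c(\hat{\Sigma})=\max_{\text{realizations}}\rank(\hat{\Rc})$, so $d_c(\hat{\Sigma})<n$ means $\rank(\hat{\Rc})\le d_c(\hat\Sigma)<n$ for \emph{every} choice of parameters. Using $\rank(\hat{C}\hat{\Rc})\le\rank(\hat{\Rc})$ again, I conclude $\rank(\hat{\Rc}_o)\le d_c(\hat\Sigma)<n$ for every realization, so $\hat{\Rc}_o$ can never have full row-rank $n$ and $\hat{\Sigma}$ is not structurally output controllable. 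Equivalently, structural output controllability forces $d_c(\hat{\Sigma})\ge n$.

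Neither part involves a serious obstacle; the main subtlety is the genericity argument in part (1), where I must be careful that ``full row-rank for some realization'' is a property holding on a Zariski-dense open set so that the two rank conditions on $\hat{\Rc}$ and $\hat{C}$ can be met by a common parameter choice. I would make explicit that each of the determinantal conditions (some maximal minor of $\hat{\Rc}$ nonzero, some maximal minor of $\hat{C}$ nonzero) defines a nonempty complement of a proper algebraic variety in parameter space, and that finitely many such complements intersect nontrivially. With that remark in place, both statements follow cleanly from the submultiplicativity of rank under matrix products.
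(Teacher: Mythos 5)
Your proposal is correct and follows essentially the same route as the paper: part (1) combines a full row-rank realization of $\hat{\Rc}$ (surjectivity) with the generic full row-rank of $\hat{C}$ to get $\rank(\hat{C}\hat{\Rc})=\rank(\hat{C})=n$, and part (2) is the same rank-submultiplicativity bound $\rank(\hat{C}\hat{\Rc})\le\rank(\hat{\Rc})\le d_c(\hat{\Sigma})<n$ holding for every realization. The only difference is that you spell out the Zariski-density argument for picking a common parameter choice, which the paper leaves implicit.
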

\begin{proof}
We first prove 1). Let $\hat{\Rc}$ denote the controllability matrix of $\hat{\Sigma}$. Since $\hat{\Sigma}$ is structurally controllable, there exists a realization of $\hat{A}$, $\hat{B}$ leading to a full-rank controllability matrix $\hat{\Rc}$ and $\rank({{\hat{C}}\hat{\Rc}})=\rank({{\hat{C}}})$. The system is structurally output controllable since there exists a full row-rank realization of ${\hat{C}}$ by its definition. 
    
We now prove 2) by contradiction. Assume $\hat{\Sigma}$ is structurally output controllable and $d_c(\hat{\Sigma})<n$. Consider generic realizations of $\hat{A}$, $\hat{B}$, $\hat{C}$. Then $r_c=\rank(\hat{C})\leq n$ and $r_r=\rank(\hat{\Rc})\leq d_c< n$. This implies $\rank(\hat{C}\Rc)\leq \min\{r_c,r_r\} <n$ for every realization of $\hat{A}$, $\hat{B}$, $\hat{C}$. This implies that ${\hat{\Sigma}}$ is not structurally output controllable, leading to a contradiction.
\end{proof}

Finally, we recall a well-known result in structural controllability theory and adapt it to the extended system \eqref{eq:netsys-exp}. Input accessibility is sufficient for structural controllability, provided one allows replacing {\em all} the subsystems of $\Sigma$ with heterogeneous ones in $\hat{\Sigma}$.
For first-order systems, this is equivalent to add generic self-loops weights to the network graph.
The following proposition follows from classical results in structural controllability theory \cite{CC-CE:2012,LY-BA:2016}.
\begin{proposition}
    If $\Sigma$ is input-accessible, there always exists a structurally output controllable system $\hat{\Sigma}$ with heterogeneous first-order local dynamics.
\end{proposition}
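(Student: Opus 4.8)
The plan is to keep every subsystem first-order, i.e. $\hat{n}_i=1$ for all $i$, while endowing each node with a generic self-loop weight $\hat{a}_i\in\mathbb{R}$; this is precisely what heterogeneous first-order local dynamics permits, since the diagonal blocks $\hat{a}_i$ are then allowed to be generic rather than fixed to zero. Because every $\hat{n}_i=1$, each output block collapses to $\hat{c}_i=1$ and hence $\hat{C}=I$. By the observation that structural output controllability coincides with structural (state) controllability when $C=I$, it therefore suffices to establish the structural controllability of this $\hat{\Sigma}$, and I would reduce the whole statement to that claim at the outset.

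First I would verify that $\hat{\Sigma}$ remains input-accessible, so that Theorem~\ref{thm:gen_dim} applies. The graph $\mathrm{G}(\hat{\Sigma})$ has the same state and input vertices as $\mathrm{G}(\Sigma)$, the same input edges (since $\hat{b}_i\neq 0$ exactly when $b_i\neq 0$), and the same inter-node edges (since $\hat{a}_{ij}$ is generic precisely when $a_{ij}\neq 0$); the only new edges are the self-loops $(x_i,x_i)$. Adding edges can only enlarge the set of vertices reachable from the input nodes, so every state vertex that was the end vertex of a stem in $\mathrm{G}(\Sigma)$ remains so in $\mathrm{G}(\hat{\Sigma})$. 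Since $\Sigma$ is input-accessible by hypothesis, $\hat{\Sigma}$ is input-accessible as well, and Theorem~\ref{thm:gen_dim} is in force for a generic realization.

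The key step is then immediate: each generic self-loop $(x_i,x_i)$ is an elementary cycle covering the single vertex $x_i$, and these $n$ singleton cycles are pairwise vertex-disjoint. Consequently the entire state vertex set $\mathcal{V}$ is covered by vertex-disjoint (length-one) cycles, so the maximal coverable set $\mathcal{Z}$ of Theorem~\ref{thm:gen_dim} equals $\mathcal{V}$, whence $\hat{\Sigma}$ is structurally controllable and, by the reduction above, structurally output controllable. I do not expect a serious obstacle here, as the argument is a direct application of Theorem~\ref{thm:gen_dim} once one reads self-loops as length-one elementary cycles; the only points that genuinely require care are checking that input accessibility transfers from $\Sigma$ to $\hat{\Sigma}$ (which follows because the interconnection and input sparsity patterns are preserved) and confirming that the singleton self-loop cycles are admissible covers in the statement of Theorem~\ref{thm:gen_dim}.
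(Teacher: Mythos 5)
Your proposal is correct and follows essentially the same route as the paper: the paper's construction is exactly to add generic self-loop weights at every node (heterogeneous first-order dynamics) and then invoke the classical fact that input accessibility plus self-loops everywhere yields structural controllability, which it cites rather than proves. Your explicit justification --- $\hat{C}=I$ reduces output controllability to state controllability, input accessibility is preserved since only edges are added, and the $n$ vertex-disjoint self-loops cover $\mathcal{V}$ so Theorem~\ref{thm:gen_dim} applies --- is precisely the standard argument behind that classical result and is consistent with how the paper treats self-loops as elementary cycles elsewhere (e.g.\ in the proof of Lemma~\ref{lem:min_fo}).
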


The same can be accomplished if one employs subsystem of order strictly larger than 1, in which all $\hat a_{i}$ are generic.
However, as we shall see in the following sections, it is in general not necessary to replace all subsystems in the network with heterogeneous ones to achieve structural output controllability.

\section{Advantages of  higher-order local dynamics}\label{sec:advantages}

\subsection{How to measure the advantage}
In the following subsections, we shall show that introducing higher-order subsystems offers advantages over employing (heterogeneous) first-order ones for the following reasons:
\begin{enumerate}[{A}1)]
    \item \label{adv1} The subsystems described by \eqref{eq:subsys} lack heterogeneous internal dynamics. By adopting higher-order subsystems it is still possible to obtain a structurally output controllable $\hat{\Sigma}$ with homogeneous internal dynamics. 
    \item \label{adv2} In general, it is not necessary to modify all subsystems in the original network to achieve structural output controllability. The number of subsystems that require replacement with higher-order ones can be lower than the number of required heterogeneous~first-order~subsystems. 
\end{enumerate}
To quantify the advantage A\ref{adv2}) above, we consider the following index:
 \begin{equation}\label{eq:Delta}
        \Delta=S-\hat{S},
    \end{equation}
    where \(\hat{S}\) represents the number of subsystems where local higher-order dynamics is introduced to achieve structural output controllability of \(\hat{\Sigma}\), while \(S\) the minimum number of subsystems where first-order heterogeneous internal dynamics has to be introduced. The advantage of employing higher-order dynamics is verified when \(\Delta > 0\).
    
   \subsection{A characterization of systems that are not structurally controllable} \label{sec:charact}
    
   Our interest is in input-accessible network systems $\Sigma$ that are not structurally controllable. The following corollary of Theorem \ref{thm:gen_dim} offers a useful characterization that highlights what are the critical interconnections that need to be resolved.

   \begin{corollary}
    \label{cor:ctrb_cases}
    An input-accessible network system $\Sigma$ is not structurally controllable if and only if every set of stems and cycles covering the state nodes in ${\rm G}(\Sigma)$ includes one of the following:
    \begin{enumerate}
    \item A stem and a cycle intersecting at some node $w$;
    \item Two cycles intersecting at some node $w$;
    \item Two stems, intersecting at some node $w$, originated from different input nodes;
    \item Two stems originated from the same input node. 
\end{enumerate}
\end{corollary}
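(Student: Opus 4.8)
The plan is to derive the corollary directly from Theorem~\ref{thm:gen_dim}. That theorem tells us that, for an input-accessible $\Sigma$, structural controllability is equivalent to the existence of a collection of \emph{vertex-disjoint} stems and cycles whose union covers all of $\Vc$ (i.e.\ $\Zc=\Vc$). Negating this, $\Sigma$ fails to be structurally controllable precisely when \emph{no} such vertex-disjoint cover exists, or equivalently when every collection of stems and cycles that covers the state nodes must contain at least one pair of elements sharing a vertex. The entire content of the corollary is then to verify that the four listed configurations are exactly the ways in which two covering elements can share a vertex, so I would prove the statement by a case analysis on the type of the intersecting pair. As a preliminary remark, I would note that input-accessibility guarantees that covers exist at all: since every $x_i\in\Vc$ is the endpoint of some stem, the union of one such stem per state node already covers $\Vc$, so the quantifier ``every set of stems and cycles covering the state nodes'' is nonvacuous.

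For the forward implication, I would assume $\Sigma$ is not structurally controllable and fix an arbitrary covering collection $\mathcal{C}$ of stems and cycles. By Theorem~\ref{thm:gen_dim} it cannot be vertex-disjoint, so some two \emph{distinct} elements $P,Q\in\mathcal{C}$ share a node $w$; the elements are genuinely distinct because a single stem or cycle is internally node-disjoint by definition (its nodes are distinct), so a repeated node cannot occur within one element. I would then split according to the types of $P$ and $Q$: if both are cycles we land in case~2; if one is a stem and the other a cycle we land in case~1; and if both are stems there are two sub-cases. When the two stems emanate from the \emph{same} input node they already share that input vertex, which is case~4; when they emanate from \emph{distinct} input nodes they cannot share an input vertex, so the common node $w$ is necessarily a state vertex, which is case~3. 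This exhausts all possibilities, so $\mathcal{C}$ contains one of the four configurations.

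For the converse, I would suppose that every covering collection contains one of the four configurations. Each configuration exhibits two distinct covering elements sharing a node (the common node $w$ in cases~1--3, and the shared origin input node in case~4), so every covering collection is non-vertex-disjoint. Hence no vertex-disjoint cover of $\Vc$ exists, and Theorem~\ref{thm:gen_dim} yields $\Zc\subsetneq\Vc$, i.e.\ $\Sigma$ is not structurally controllable.

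I expect the main (though modest) obstacle to be arguing that the case split is genuinely exhaustive, and in particular pinning down that a stem--stem conflict is forced to be of type~3 or type~4 and nothing else. This hinges on the observation that stems begin at input nodes, so sharing the origin is equivalent to sharing an input vertex (case~4), whereas a conflict between two stems originating from different input nodes can only occur at a state vertex (case~3); there is no further possibility, since the only non-state vertices in $\mathrm{G}(\Sigma)$ are input nodes and a stem contains exactly one of them.
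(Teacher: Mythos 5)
Your proof is correct and follows essentially the same route as the paper: both directions are obtained by negating the vertex-disjoint-cover criterion of Theorem~\ref{thm:gen_dim}, with your version merely spelling out the (correct and exhaustive) case analysis showing that conditions 1)--4) enumerate all possible ways two covering elements can intersect. The paper's own proof is terser but relies on exactly the same observation.
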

\begin{proof} 
If 1)-4) do not hold, the state nodes in ${\rm G}(\Sigma)$ can be covered by
vertex-disjoint stems and cycles. By Theorem \ref{thm:gen_dim} the system is structurally controllable.
If at least one of conditions 1)-4) holds, the same theorem ensures that the generic dimension of the controllable subspace is not equal to the system size, so that the system is not structurally controllable.
\end{proof}

Using the result above, we can define two classes of network systems, which require different types of higher-order dynamics to be made structurally controllable.

\begin{definition}
An input-accessible network system $\Sigma$  is called an:
\begin{itemize}
     \item {\bf X-network}, {if it is  not structurally controllable} and {\em there exists} a set of stems and cycles covering the state nodes in ${\rm G}(\Sigma)$ satisfying conditions 1)-2)-3) above, but not 4);
        \item {\bf Y-network}, if {\em every} set of stems and cycles covering the state nodes in ${\rm G}(\Sigma)$ satisfies condition 4) above, {but never  1)-2)-3)}.
\end{itemize} 
\end{definition}

The name of these classes is inspired by the shape of interconnection exhibited by the network that prevents structural controllability (examples are provided in Figure \ref{fig:XY_ex}(a)-(b)-(c) for X-networks, and Figure \ref{fig:XY_ex}(d) for Y-networks).
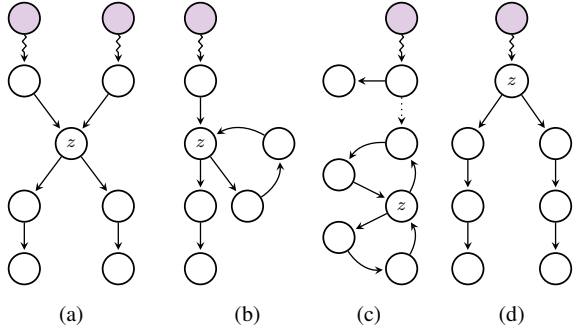
\begin{figure}
    \centering
    \resizebox{0.425\textwidth}{!}{

	\begin{tikzpicture}[shorten >=1pt, auto, ultra thick,
		node_style/.style={draw, circle,thick, fill=white, minimum size=0.5cm,font=\footnotesize},every edge/.append style = {thick}]
   \begin{scope}
   \node[node_style,inner sep=0.01cm] (1) at (0,0) {$z$};
   \node[node_style,inner sep=0.01cm] (3) at (0.75,-1) {};
   \node[node_style,inner sep=0.01cm] (2) at (-0.75,-1) {};
   \node[node_style,fill=inputcolor,inner sep=0.01cm] (u1) at (-0.75,2) {};
   \node[node_style,fill=inputcolor,inner sep=0.01cm] (u2) at (0.75,2) {};
   \node[node_style,inner sep=0.01cm] (ei1) at (-0.75,1) {};
   \node[node_style,inner sep=0.01cm] (ei2) at (0.75,1) {};
   
   \node[node_style,inner sep=0.01cm] (d1) at (-0.75,-2) {};
   \node[node_style,inner sep=0.01cm] (d2) at (0.75,-2) {};
   \draw[-stealth,semithick] (1) -- (2);

   \draw[-stealth,semithick,line join=round,
    semithick,
    decorate, decoration={
        zigzag,
        segment length=4,
        amplitude=.9,post=lineto,
        post length=5pt
    }] (u1) -- (ei1);

    \draw[-stealth,semithick,line join=round,
    semithick,
    decorate, decoration={
        zigzag,
        segment length=4,
        amplitude=.9,post=lineto,
        post length=5pt
    }] (u2) -- (ei2);

   \draw[-stealth,semithick,] (ei1) -- (1);
   \draw[-stealth,semithick,] (ei2) -- (1);
   \draw[-stealth,semithick,] (3) -- (d2);
   \draw[-stealth,semithick,] (2) -- (d1);
   \draw[-stealth,semithick,] (1) -- (3);
   
    \node at (0,-2.75) {(a)};
   
    \end{scope}

    \begin{scope}[xshift=80]
    \node[node_style,inner sep=0.01cm] (1) at (-0.75,0) {$z$};
   \node[node_style,inner sep=0.01cm] (3) at (0,-1) {};
   \node[node_style,inner sep=0.01cm] (4) at (0.5,0) {};
   \node[node_style,inner sep=0.01cm] (2) at (-0.75,-1) {};
   \node[node_style,fill=inputcolor,inner sep=0.01cm] (u) at (-0.75,2) {};
   \node[node_style,inner sep=0.01cm] (ei1) at (-0.75,1) {};
   \node[node_style,inner sep=0.01cm] (d1) at (-0.75,-2) {};
   
   \draw[-stealth,semithick] (1) -- (2);
   \draw[-stealth,semithick,line join=round,
    semithick,
    decorate, decoration={
        zigzag,
        segment length=4,
        amplitude=.9,post=lineto,
        post length=5pt
    }] (u) -- (ei1);
    
   \draw[-stealth,semithick] (2) -- (d1);
   \draw[-stealth,semithick] (ei1) -- (1);
   \draw[-stealth,semithick] (1) -- (3);
   \draw[-stealth,semithick] (3) to [bend right] (4);   
   \draw[-stealth,semithick] (4) to [bend right] (1);
   
    \node at (0,-2.75) {(b)};
    \end{scope}
   

    \begin{scope}[xshift=150,rotate=270]
         \node[node_style] (1) at (0-1,0) {};
    \node[node_style,inner sep=0.01cm] (3) at (1-1,0) {};
   \node[node_style,inner sep=0.01cm] (6) at (2-1,0) {$z$};
   \node[node_style,inner sep=0.01cm] (4) at (1.5-1,-1) {};
   \node[node_style,inner sep=0.01cm] (2) at (0-1,-1) {};
   \node[node_style,fill=inputcolor,inner sep=0.01cm] (u) at (-2,0) {};
    \node[node_style,inner sep=0.01cm] (5) at (2.5-1,-1) {};
    \node[node_style,inner sep=0.01cm] (7) at (3-1,0) {};
    
   \draw[-stealth,semithick] (1) -- (2);
  
   \draw[-stealth,semithick,line join=round,
    semithick,
    decorate, decoration={
        zigzag,
        segment length=4,
        amplitude=.9,post=lineto,
        post length=5pt
    }] (u) -- (1);

   \draw[-stealth,dotted,semithick] (1) to (3);
   \draw[-stealth,semithick] (3) to [bend right] (4);
   \draw[-stealth,semithick] (4) -- (6);
   \draw[-stealth,semithick] (6) to [bend right] (3);
   
   \draw[-stealth,semithick] (7) to [bend right] (6);
   \draw[-stealth,semithick] (6) -- (5);
   
   \draw[-stealth,semithick] (5) to [bend right] (7);

\end{scope}
    \node at (4.75,-2.75) {(c)};
    
\begin{scope}[xshift=200]   
\node[node_style,] (1) at (0,0+1) {$z$};
\node[node_style,] (2) at (-0.7,0) {\ };
\node[node_style,] (3) at (-0.7,-1) {\ };
\node[node_style,] (4) at (-0.7,-2) {\ };
\node[node_style,] (h2) at (0.7,-0) {\ };
\node[node_style,] (h3) at (0.7,-1) {\ };
\node[node_style,] (h4) at (0.7,-2) {\ };

\node[node_style,fill=inputcolor] (u) at (0,2) {};


\draw[-stealth,semithick,line join=round,
semithick,
decorate, decoration={
    zigzag,
    segment length=4,
    amplitude=.9,post=lineto,
    post length=5pt
}] (u) -- (1);

\draw[-stealth,semithick] (1) -- (2);
\draw[-stealth,semithick] (2) -- (3);
\draw[-stealth,semithick] (3) -- (4);
\draw[-stealth,semithick] (1) -- (h2);

\draw[-stealth,semithick] (h2) -- (h3);
\draw[-stealth,semithick] (h3) -- (h4);

\node at (0,-2.75) {(d)};
\end{scope}

    \end{tikzpicture}
  }
    \vspace{-0.75cm}
    \caption{{Examples of X-networks are given in subfigures (a)-(b)-(c), and Y-networks in (d). Each network is covered by a set of paths which exhibit a critical interconnection outlined in Corollary \ref{cor:ctrb_cases}. Two non-vertex-disjoint stems originated from different input nodes are depicted in (a), a stem sharing a node with a cycle in (b), two non-vertex-disjoint cycles in (c). (d) presents two stems that share a input and a state node. The input nodes are highlighted in violet. The edges connecting nodes in the considered covering paths are solid, other edges are dotted.}}
    \vspace{-0.5cm}
    \label{fig:XY_ex}
\end{figure}

 Notice that Y-networks  satisfy condition 4) above for all covering sets of stems and cycles, and hence they are (also) not structurally controllable.
{Furthermore, observe that every Y-network must be acyclic. This follows from input accessibility: if a cycle were present, it would necessarily be connected to the input, implying the existence of a set of intersecting stems and cycles covering state nodes.}

Given the definitions, controllability in X-networks can only be prevented by paths in the covering set of stems and cycles intersecting only for {\em state} nodes, whereas in Y-network it is essentially prevented by intersections in the {\em input} nodes. As we show in the next section, a modified network can be obtained  for X-networks by separating intersecting paths by introducing higher-order {\em homogeneous} dynamics. On the other hand, Y-networks cannot be modified in this way to obtain disjoint paths, due to the fact that we cannot modify the input nodes. In this case, the addition of heterogeneous higher-order dynamics is necessary to achieve structural output controllability. It is easy to see that X and Y networks do not comprise all the possible systems.  We will later discuss how  controllability can  be obtained with higher-order local dynamics also for systems that are neither type X nor Y.

\subsection{Making X-networks structurally  controllable}\label{sec:X-net}

We next show that X-networks can be made structurally controllable and, consequently, structurally output-controllable, by introducing homogeneous higher-order dynamics.  Specifically, we replace a set of subsystems in $\Sigma$ with higher-order subsystems in $\hat{\Sigma}$ with trivial internal dynamics. The dynamics of each modified subsystem and of the induced overall system $\hat{\Sigma}$ are described by \eqref{eq:subsys-exp} and \eqref{eq:netsys-exp} respectively, with $\hat{a}_i=0$, for all $i=1,\dots,n$. 

The following theorem shows that by carefully selecting the subsystems to be modified in $\hat{\Sigma}$  it is always possible to obtain a structurally controllable system $\hat{\Sigma}$ with homogeneous internal dynamics.
\begin{theorem}
\label{thm:hoi}
Consider a X-network ${\Sigma}$. Then there exists a structurally controllable system~$\hat{\Sigma}$ composed of subsystems with homogeneous internal dynamics.
\end{theorem}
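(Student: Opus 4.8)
The plan is to upgrade the non–vertex-disjoint covering guaranteed by the X-network hypothesis into a genuinely vertex-disjoint covering of an enlarged structure graph, and then invoke Theorem~\ref{thm:gen_dim}. By definition of an X-network there is a set $\mathcal{P}$ of stems and cycles covering every state node of ${\rm G}(\Sigma)$ whose only obstructions to vertex-disjointness are of types 1)--3) in Corollary~\ref{cor:ctrb_cases}; crucially, type 4) is excluded, so no two stems of $\mathcal{P}$ share an input node. Hence every pairwise intersection of distinct elements of $\mathcal{P}$ occurs at a \emph{state} node. For each state node $w$ I would let $d_w\ge 1$ denote its multiplicity, i.e.\ the number of elements of $\mathcal{P}$ passing through $w$; since stems and cycles consist of distinct nodes, each path visits $w$ at most once, so $d_w$ is exactly the number of paths through $w$, and $d_w\ge 1$ because $\mathcal{P}$ covers all state nodes.

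The construction is then: replace every node $w$ with $d_w\ge 2$ by a homogeneous subsystem of order $\hat n_w=d_w$, setting $\hat a_w=0$ (so that no self-loops and no internal edges are introduced), leave all nodes with $d_w=1$ first-order, and take every admissible interconnection block $\hat a_{ij}$ and input block $\hat b_i$ to be generic. By construction $\hat\Sigma$ has homogeneous internal dynamics. In the structure graph ${\rm G}(\hat\Sigma)$ an original edge $(j,w)$ becomes a complete bipartite family of edges between the copies of $j$ and the copies of $w$, and likewise for outgoing edges, so each copy $w^{(s)}$ inherits all incoming and outgoing adjacencies of $w$.

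Next I would lift $\mathcal{P}$ to a covering $\hat{\mathcal{P}}$ of ${\rm G}(\hat\Sigma)$: at each expanded node $w$, number the $d_w$ paths through $w$ from $1$ to $d_w$ and route the $i$-th such path through the distinct copy $w^{(i)}$, leaving first-order nodes untouched. Because the needed edges $a\to w^{(i)}$ and $w^{(i)}\to b$ are present in ${\rm G}(\hat\Sigma)$, each rerouted stem (resp.\ cycle) remains a valid stem (resp.\ cycle). The resulting paths are pairwise vertex-disjoint: at any expanded node the $d_w$ paths occupy the $d_w$ distinct copies, at any first-order node at most one path passes, and no two stems meet at an input node since 4) is excluded. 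Moreover every copy $w^{(i)}$ is covered by exactly one path and every first-order node is covered as before, so $\hat{\mathcal{P}}$ covers all $\hat n$ state nodes. Note that no internal edges within a block are ever used—each copy carries a single path—which is precisely why homogeneous dynamics ($\hat a_w=0$) suffice.

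Finally I would check that $\hat\Sigma$ is input-accessible, so that Theorem~\ref{thm:gen_dim} applies: since $\Sigma$ is input-accessible, each $w$ is reachable from some input in ${\rm G}(\Sigma)$, and this directed path lifts to one reaching any prescribed copy $w^{(i)}$ using the inherited adjacencies. Theorem~\ref{thm:gen_dim} then gives that the generic dimension of the controllable subspace of $\hat\Sigma$ equals $\hat n$, since $\hat{\mathcal{P}}$ is a vertex-disjoint cover of all state nodes; hence $\hat\Sigma$ is structurally controllable. The main obstacle, and the step where the X-network hypothesis is indispensable, is the disjointness verification of the lift: separating two paths that share a node is only possible because the sharing happens at a \emph{state} node, which we are permitted to replace by a higher-order subsystem, whereas a type-4) sharing would occur at an input node that the construction is not allowed to modify.
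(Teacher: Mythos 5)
Your proposal is correct and follows essentially the same route as the paper's own proof: both expand each shared state node into a homogeneous block of order equal to the number of covering paths through it, inherit all adjacencies so that the covering set lifts to a vertex-disjoint family of stems and cycles (with disjointness of stems guaranteed by the exclusion of type-4 intersections), and conclude via Theorem~\ref{thm:gen_dim}. No gaps; the disjointness and input-accessibility checks you flag as the key steps are exactly the ones the paper verifies.
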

\begin{proof}
Since $\Sigma$ is a X-network there exists a set of stems and cycles covering the state nodes in ${\rm G}(\Sigma)$ which satisfies conditions 1)-2)-3) in Corollary \ref{cor:ctrb_cases}, but not 4). Let $\Pc = \{p_1, \dots, p_\ell\}$ be one such sets, and denote by $\mathcal{Z}$ the set of state nodes shared by at least two distinct paths in $\Pc$.
In the following, we give a constructive graph-theoretic method to obtain a structurally controllable extended network $\hat{\Sigma}$ with homogeneous internal dynamics.

We start by replacing each subsystem in $\Sigma$ corresponding to a shared node $z\in\mathcal{Z}$ with an higher-order subsystem in $\hat{\Sigma}$ whose order $\ell$ is equal to the number of paths in $\Pc$ sharing $z$. 
 The addition of local state variables is equivalent, using the notation introduced in Section \ref{sec:graphlift}, to an extension of the original graph $\mathrm{G}({\Sigma})$ to $\mathrm{G}(\hat{\Sigma})$ by the addition of $\ell-1$ nodes for each node $z\in\mathcal{Z}$. So each $z$ now corresponds a subgraph, which we denote by $\mathrm{G}_z(\hat{\Sigma}),$ of $\mathrm{G}(\hat{\Sigma}).$  We next discuss how these new nodes need to be connected to the rest of the graph in order to attain controllability. 

If $z\in\mathcal{Z}$ and $z$ is shared between $q$ distinct paths $\{p_{k_i}\}_{i=1}^q\subseteq \mathcal{P}$,  it will be convenient to label the nodes in the subgraph $\mathrm{G}_z(\hat{\Sigma})$ as $z^{k_1},\dots,z^{k_q}$.
 If $z\not\in\mathcal{Z}$, it corresponds to a subsystem that has not been replaced, and we maintain for it the same label $z$ also in ${\rm G}(\hat\Sigma).$ Notice we are  considering also fictitious subsystems associated to input nodes, which however will not need to be replaced since the system is a X-network.

We next construct a set $\hat{\Pc}=\{\hat{p}_1, \dots, \hat{p}_\ell\}$ of sequences of nodes in ${\rm G}(\hat{\Sigma})$ that will take the role of $\Pc.$  Let $\hat{p}_{k,s},p_{k,s}$ be the $s$-th elements in the sequences $\hat{p}_k$, ${p}_k$ respectively. For all $k,s,$ we set:
\begin{enumerate}
    \item $\hat{p}_{k,s}=p_{k,s}$ if $p_{k,s}\notin \Zc$; 
    \item $\hat{p}_{k,s}=p_{k,s}^{k}$ if $p_{k,s}\in \Zc$. 
\end{enumerate}
 We next add to ${\rm G}(\hat{\Sigma})$ the edges connecting each of the sequences $\hat p_k.$ 
$\hat{\Pc}$ is now a set of paths in ${\rm G}(\hat{\Sigma})$ and, by construction, it covers all state nodes. 
Notice that if $p_k$ is a stem or a cycle then $\hat{p}_k$ is also a stem or cycle, respectively. Moreover all paths $\hat{p}_k$ are disjoint since:
(i) the state nodes in $\hat{p}_i$ are different from the state nodes in $\hat{p}_j$ for all $i,j$ with $i\ne j$; and (ii) if $\hat{p}_i$ and $\hat{p}_j$ are different stems then they originate from different input nodes since $\Sigma$ is a X-network. 

Finally, to ensure input-accessibility, we define the full edge set $\hat {\cal E}$ of ${\rm G}(\hat{\Sigma})$ as follows: 
\begin{enumerate}
    \item $(i,j)\in\hat{\Ec}$ if $(i,j)\in \Ec$, $i,j\notin \mathcal{Z}$;
    \item $(i,j^k)\in\hat{\Ec}$ if $(i,j)\in \Ec$, $i\notin \Zc,j\in {\Zc},\forall k$;
    \item $(i^k,j)\in\hat{\Ec}$ if $(i,j)\in \Ec$, $i\in \Zc,j\notin {\Zc},\forall k$;
    \item $(i^k,j^s)\in\hat{\Ec}$ if $(i,j)\in \Ec$, $i,j\in \Zc,\forall k,s$.
\end{enumerate}
 Note that the above edge choice also includes (as a subset) the edges connecting the nodes $\hat p_k$ we had already added. 
 In addition, the rules 1)-2)-3)-4) above imply that nodes associated to the $i$-th subsystem in ${\rm G}(\hat{\Sigma})$ are connected to nodes associated to the $j$-th subsystem if and only if nodes associated to the $i$-th subsystem were already connected to nodes associated to the $j$-th one in $\mathrm{G}(\Sigma)$. 
{Note that in the state matrix of the extended system $\hat\Sigma$ we choose $\hat{a}_{ij}\neq 0$ only if there is an edge from a {state} node in $\mathrm{G}_j(\hat{\Sigma})$ to a {state} node in $\mathrm{G}_i(\hat{\Sigma})$.} Similarly in the input matrix of $\hat{\Sigma}$ we choose $\hat{b}_{k,s} \neq 0$ only if in $\mathrm{G}_k(\hat{\Sigma})$ there is an edge from the s-th input node to one of the state nodes.
Hence, by construction, the state and input matrices of the extended system are such that $\hat{a}_{ij}=0$, {$\hat{b}_{k,s}=0$}   only if  ${a}_{ij}=0$, {${b}_{k,s}=0$}, and therefore the locality constraints on the interconnections specified in Sec.~\ref{sec:hod} are satisfied.
The above construction yields an input-accessible system $\hat{\Sigma},$ with the state nodes in $\mathrm{G}(\hat{\Sigma})$ that can be covered by vertex-disjoint stems and cycles. By Theorem \ref{thm:gen_dim}, $\hat{\Sigma}$ is structurally controllable. Note that all subsystems in $\hat{\Sigma}$ have homogeneous internal dynamics since if we consider the $j$-th subsystem the nodes $j^k$,  $\forall k$,~are~not~connected~each~other.
\end{proof}

The following corollary of Theorem \ref{thm:hoi} follows by directly applying Lemma~\ref{lem:str_out_ctrb}.
\begin{corollary}
The system $\hat{\Sigma}$ in the statement of Theorem 2 is structurally output controllable.
\end{corollary}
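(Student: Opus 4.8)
The plan is to obtain the claim as an immediate consequence of Theorem~\ref{thm:hoi} combined with part~1 of Lemma~\ref{lem:str_out_ctrb}. Theorem~\ref{thm:hoi} guarantees that the extended system $\hat{\Sigma}$ built from an X-network is structurally controllable, while part~1 of Lemma~\ref{lem:str_out_ctrb} asserts that structural controllability implies structural output controllability. Chaining these two facts yields the corollary, so the proof is essentially a one-line invocation.

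Spelling out the mechanism, I would first use Theorem~\ref{thm:hoi} to fix a realization of $\hat{A}$, $\hat{B}$ for which the controllability matrix $\hat{\Rc}$ has full row rank $\hat{n}$. I would then recall that $\hat{C}$ is block diagonal with blocks $\hat{c}_i\in\mathbb{R}^{1\times\hat{n}_i}$, each of which is a nonzero row (generic when $\hat{n}_i>1$, and equal to $1$ when $\hat{n}_i=1$); hence $\hat{C}$ admits a realization of full row rank $n$. For such a joint realization of $\hat{A}$, $\hat{B}$, $\hat{C}$ one has $\rank(\hat{C}\hat{\Rc})=\rank(\hat{C})=n$, so the output controllability matrix $\hat{\Rc}_o=\hat{C}\hat{\Rc}$ is of full row rank, which is precisely structural output controllability. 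This is exactly the argument already recorded in the proof of Lemma~\ref{lem:str_out_ctrb}.

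There is no genuine obstacle to overcome: all the substantive content resides in the construction carried out in the proof of Theorem~\ref{thm:hoi}, and the only point to verify is that the prescribed block-diagonal structure of $\hat{C}$ can attain rank $n$, which is immediate from each $\hat{c}_i$ being nonzero. Thus the corollary follows directly by applying part~1 of Lemma~\ref{lem:str_out_ctrb} to the structurally controllable system produced by Theorem~\ref{thm:hoi}.
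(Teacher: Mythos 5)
Your proposal is correct and follows exactly the paper's argument: the paper also obtains the corollary by directly applying part~1 of Lemma~\ref{lem:str_out_ctrb} to the structurally controllable system produced by Theorem~\ref{thm:hoi}. The additional detail you spell out about $\hat{C}$ attaining full row rank is simply the content already contained in the paper's proof of that lemma.
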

\begin{remark} \label{rem:nt-dyn} Notice that Theorem \ref{thm:hoi} offers a simple, constructive, and systematic method to address all critical interconnections of type 1)-2)-3) and make the extended network system structurally controllable. The critical intersection nodes are substituted with higher-order ones that have {\em trivial internal dynamics}: the higher-order systems are just integrators. It is only thanks to the extra dimensions (i.e. nodes) that new spanning graphs allow for interconnections that avoid the class X properties.{  Fig.~\ref{fig:hoi_only} shows examples of elementary X-networks that are rendered structurally controllable by introducing homogeneous higher-order subsystems, following the method described in the proof of Theorem 2.} \myend
\end{remark}

\begin{figure}[!h]
    \centering
    {\resizebox{0.40\textwidth}{!}{

	\begin{tikzpicture}[shorten >=1pt, auto, ultra thick,
		node_style/.style={draw, circle,thick, fill=white, minimum size=0.5cm,font=\footnotesize},every edge/.append style = {thick}]

     \begin{scope}[]
     \draw[draw=red,line width=0.01,fill=BrickRed!10] (0,0) ellipse (1.2 and 0.5);
    \node[node_style,inner sep=0.01cm] (1) at (-0.75,0) {$z^1$};
    \node[node_style,inner sep=0.01cm] (4) at (0.75,0) {$z^2$};
   \node[node_style,inner sep=0.01cm] (3) at (0.75,-1) { \ };
   \node[node_style,inner sep=0.01cm] (2) at (-0.75,-1) { \ };
   \node[node_style,fill=inputcolor,inner sep=0.01cm] (u1) at (-0.75,2) {};
   \node[node_style,fill=inputcolor,inner sep=0.01cm] (u2) at (0.75,2) {};
   \node[node_style,inner sep=0.01cm] (ei1) at (-0.75,1) { \ };
   \node[node_style,inner sep=0.01cm] (ei2) at (0.75,1) { \ };
   \node[node_style,inner sep=0.01cm] (d1) at (-0.75,-2) {};
   \node[node_style,inner sep=0.01cm] (d2) at (0.75,-2) {};
   \draw[-stealth,semithick] (1) -- (2);
    \draw[-stealth,dotted,semithick] (4) -- (2);

   \draw[-stealth,semithick,line join=round,
    semithick,
    decorate, decoration={
        zigzag,
        segment length=4,
        amplitude=.9,post=lineto,
        post length=5pt
    }] (u2) -- (ei2);

    \draw[-stealth,semithick,line join=round,
    semithick,
    decorate, decoration={
        zigzag,
        segment length=4,
        amplitude=.9,post=lineto,
        post length=5pt
    }] (u1) -- (ei1);

   \draw[-stealth,semithick] (ei1) -- (1);
   \draw[-stealth,dotted,semithick] (ei2) -- (1);
   \draw[-stealth,dotted,semithick] (ei1) -- (4);
   
   \draw[-stealth,semithick] (ei2) -- (4);
   \draw[-stealth,semithick] (4) --  (3);
   \draw[-stealth, dotted,semithick] (1) -- (3);
   
   \draw[-stealth,semithick] (2) -- (d1);
   \draw[-stealth,semithick] (3) -- (d2);
   
       \node[text width=0.4cm] at (0,-2.85)  {(a)} ;
    \end{scope}
    
    \begin{scope}[xshift=80]
         \draw[draw=red,line width=0.01,fill=BrickRed!10] (-0.12,0) ellipse (1 and 0.5);

   \node[node_style,inner sep=0.01cm] (1) at (-0.75,0) {$z^1$};
   \node[node_style,inner sep=0.01cm] (3) at (1.,-1) {};
   \node[node_style,inner sep=0.01cm] (4) at (1.5,0) {};
   \node[node_style,inner sep=0.01cm] (2) at (-0.75,-1) {};
   \node[node_style,fill=inputcolor,inner sep=0.01cm] (u) at (-0.75,2) {};
   \node[node_style,inner sep=0.01cm] (ei1) at (-0.75,1) {};
   \node[node_style,inner sep=0.01cm] (5) at (0.5,0) {$z^2$};
   \node[node_style,inner sep=0.01cm] (d1) at (-0.75,-2) {};
   
   \draw[-stealth,semithick] (1) -- (2);
   \draw[-stealth,semithick,dotted] (5) -- (2);
   \draw[-stealth,semithick,line join=round,
    semithick,
    decorate, decoration={
        zigzag,
        segment length=4,
        amplitude=.9,post=lineto,
        post length=5pt
    }] (u) -- (ei1);

   \draw[-stealth,semithick] (ei1) -- (1);
   \draw[-stealth,semithick,dotted] (ei1) -- (5);
   \draw[-stealth,semithick] (5) to [bend right] (3);
   \draw[-stealth,semithick,dotted] (1) -- (3);
   \draw[-stealth,semithick] (3) to [bend right] (4);

   \draw[-stealth,semithick] (4) to [bend right] (5);
   \draw[-stealth,semithick,dotted] (4) to [bend right] (1);
   
   \draw[-stealth,semithick] (2) -- (d1);   
   
   
    \node[text width=0.4cm] at (0.25,-2.85)  {(b)} ;
    \end{scope}



    \begin{scope}[xshift=180, rotate=270]
     \draw[draw=red,line width=0.01,fill=BrickRed!10] (1,-0.5) ellipse (0.5 and 0.9);
    
    \node[node_style,inner sep=0.01cm] (1) at (-1,0) {};
    \node[node_style,inner sep=0.01cm] (3) at (0,0) {\ };
   \node[node_style,inner sep=0.01cm] (6) at (1,0) {$z^1$};
   \node[node_style,inner sep=0.01cm] (8) at (1,-1) {$z^2$};
   \node[node_style,inner sep=0.01cm] (4) at (0,-1) { \ };
   \node[node_style,inner sep=0.01cm] (2) at (-1,-1) { \ };

   \node[node_style,fill=inputcolor,inner sep=0.01cm] (u) at (-2,0) {};
    \node[node_style,inner sep=0.01cm] (5) at (2,-1) { \ };
    \node[node_style,inner sep=0.01cm] (7) at (2,0) { \ };
   \draw[-stealth,semithick] (1) -- (2);
   \draw[-stealth,semithick,line join=round,
    semithick,
    decorate, decoration={
        zigzag,
        segment length=4,
        amplitude=.9,post=lineto,
        post length=5pt
    }] (u) -- (1);

   \draw[-stealth,semithick,dotted] (1) -- (3);
   \draw[-stealth,semithick] (3) to [bend right]  (4);
   \draw[-stealth,semithick] (4) -- (6);
   \draw[-stealth,semithick] (6) to [bend right] (3);
   \draw[-stealth,semithick,dotted] (8) -- (3);

   \draw[-stealth,semithick] (7) -- (8);
   \draw[-stealth,semithick,dotted] (7) -- (6);
   
   \draw[-stealth,semithick] (8) to [bend right] (5);
   \draw[-stealth,semithick,dotted] (6) -- (5);
   
   \draw[-stealth,semithick,dotted] (4) to [bend right] 
 (8);
   \draw[-stealth,semithick] (5) to [bend right] (7);

    \end{scope}
       \node[text width=0.4cm] at (5.75,-2.85)  {(c)} ;
    \end{tikzpicture}
  }}
    \caption{{Examples of higher-order subsystems placement and design described in the proof of Theorem \ref{thm:hoi}. The depicted networks  are the expanded  X-networks corresponding to those illustrated  in subfigures \ref{fig:XY_ex} (a)-b)-(c). The input nodes are highlighted in violet. Nodes corresponding to the same higher-order subsystem are marked in red. The edges connecting nodes in vertex-disjoint stems and cycles covering all state nodes are solid. The other edges are dotted.}}
    \vspace{-0.25cm}
     \label{fig:hoi_only}
\end{figure}
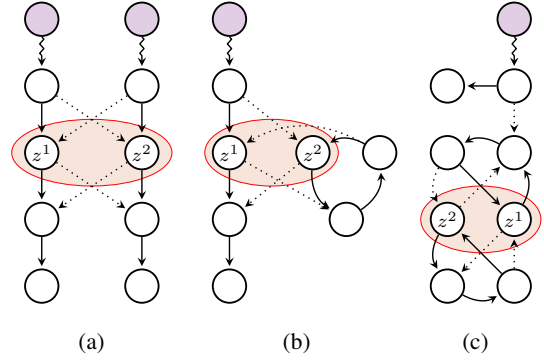

We next focus on assessing the advantage in the number of subsystems with respect to first-order dynamics. In the following lemma, we characterize the minimum number $S$ of first-order heterogeneous subsystems that need to be introduced to obtain a structurally output controllable system $\hat{\Sigma}$.
\begin{lemma}\label{lem:min_fo}
    Let $\Zc \subseteq \mathcal{V}$ be a maximum cardinality set of nodes that can be covered by vertex-disjoint stems and cycles in ${\rm G}(\Sigma)$, then the minimum number of subsystems that need to be replaced with first-order heterogeneous ones to attain structural output controllability is $S=n-|\Zc|$. 
\end{lemma}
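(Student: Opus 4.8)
The plan is to prove the two inequalities $S \le n-|\Zc|$ and $S \ge n-|\Zc|$ separately, exploiting the fact that for first-order heterogeneous subsystems each $\hat n_i=1$, so $\hat C=I$ and structural output controllability coincides with structural (state) controllability. By Theorem \ref{thm:gen_dim} the latter reduces to the purely combinatorial statement that the maximum number of state nodes coverable by vertex-disjoint stems and cycles equals $n$. The key observation is that, at the graph level, replacing a first-order subsystem with a first-order heterogeneous one is exactly the addition of a self-loop (a generic nonzero diagonal entry $\hat a_i$) at the corresponding state node: in $\Sigma$ one has $a=0$, so no state node initially carries a self-loop, and the off-diagonal locality pattern is left unchanged. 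Thus the question becomes: what is the minimum number of self-loops that must be added to $\mathrm{G}(\Sigma)$ to raise the maximum covering size to $n$?

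For the upper bound $S\le n-|\Zc|$ I would argue constructively. Fix a maximum family of vertex-disjoint stems and cycles covering $\Zc$, and add a self-loop to each of the $n-|\Zc|$ state nodes in $\Vc\setminus\Zc$. Each such self-loop is a length-one cycle, and these new cycles are vertex-disjoint both from one another and from the original family, since they sit precisely on the previously uncovered nodes. Their union therefore covers all of $\Vc$; input accessibility is preserved because only edges are added; and Theorem \ref{thm:gen_dim} then yields structural controllability, hence structural output controllability. So $n-|\Zc|$ heterogeneous first-order subsystems always suffice.

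For the lower bound $S\ge n-|\Zc|$, the central step is a monotonicity claim: adding a single self-loop increases the maximum covering size by at most one. I would establish this by an exchange argument. Let $\mu(\cdot)$ denote the maximum covering size, and compare a graph with a graph obtained from it by adding one self-loop at a node $v$. Given any vertex-disjoint family of stems and cycles in the enlarged graph, either it does not use the new self-loop, in which case it is already a valid family in the original graph, or it uses the self-loop as a length-one cycle at $v$, and deleting that single cycle yields a valid family covering exactly one fewer node. Hence $\mu$ grows by at most one per added self-loop. Writing $\mathrm{G}_k$ for the graph after $k$ self-loop additions and recalling $\mu(\mathrm{G}(\Sigma))=|\Zc|$, induction gives $\mu(\mathrm{G}_k)\le |\Zc|+k$. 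Since structural controllability requires $\mu=n$, any admissible modification with $k$ modified subsystems must satisfy $k\ge n-|\Zc|$, so $S\ge n-|\Zc|$; combined with the construction above this gives $S=n-|\Zc|$.

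The part I expect to require the most care is the exchange step of the lower bound: one must verify that a self-loop at $v$ creates no coverable structure beyond the length-one cycle at $v$ (in particular, it opens no new stems and no longer cycles passing through distinct nodes), so that removing it from any covering family costs at most one covered node. Once this is pinned down the bound follows by a one-line induction, and the two directions together yield the claimed value of $S$.
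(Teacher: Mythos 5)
Your proof is correct and follows essentially the same route as the paper: the upper bound via adding self-loops (length-one cycles) at the $n-|\Zc|$ uncovered nodes, and the lower bound via the observation that each added self-loop raises the maximum covering size by at most one, so that $S$ modifications give at most $|\Zc|+S$ coverable nodes. Your exchange argument merely spells out a step the paper asserts directly when it writes $d_c(\hat{\Sigma})\leq |\Zc|+S$.
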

\begin{proof}
Let us endow the $i$-th subsystem in $\hat{\Sigma}$ with first-order heterogeneous internal dynamics if $x_i\not\in \Zc$. ${\rm G}(\hat{\Sigma})$ has the same topology of ${\rm G}({\Sigma})$  except for additional self-loops at nodes $x_j$ satisfying $x_j\notin \Zc$. ${\rm G}(\hat{\Sigma})$ is therefore covered by vertex-disjoint stem and cycles and, by Theorem \ref{thm:gen_dim} and Lemma \ref{lem:str_out_ctrb} $\hat{\Sigma}$ is structurally output controllable. We prove minimality by contradiction, suppose we add first-order heterogeneous internal dynamics to ${S}<n-|\Zc|$ nodes and the system is structurally output controllable. By Theorem \ref{thm:gen_dim}, the generic dimension of the controllability subspace of $\hat{\Sigma}$, $d_c(\hat{\Sigma})$, satisfies {$d_c(\hat{\Sigma})\leq |\Zc|+S< n $}
and by Lemma \ref{lem:str_out_ctrb} the system is not structurally output controllable, leading to a~contradiction.~
\end{proof}

The following corollary of Theorem \ref{thm:hoi} shows that the benefit of employing higher-order dynamics over heterogeneous first-order ones, as quantified by $\Delta$ in \eqref{eq:Delta}, can grow unbounded with the network size for the case of X-networks.

\begin{corollary}
Let $\Delta$ be as in \eqref{eq:Delta}. There exist X-networks $\Sigma$ for which $\Delta \to \infty$ as $n \to \infty$.
\end{corollary}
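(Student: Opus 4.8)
The plan is to exhibit an explicit one-parameter family of X-networks $\{\Sigma_m\}_{m\ge 1}$ for which the first-order cost $S$ grows linearly in $m$ while the higher-order cost $\hat S$ stays constant, so that $\Delta = S-\hat S \to \infty$. Concretely, I would take two input nodes $u_1,u_2$, a single hub state node $z$, and two node-disjoint downstream chains of length $m$, with edges $u_1\to z$, $u_2\to z$, then $z\to p_1\to p_2\to\cdots\to p_m$ and $z\to q_1\to q_2\to\cdots\to q_m$. This yields $n=2m+1$ state nodes; the graph is acyclic and every state node is reachable from an input, so each $\Sigma_m$ is input-accessible (a prerequisite for applying Theorem~\ref{thm:hoi} and Lemma~\ref{lem:min_fo}).

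First I would verify that $\Sigma_m$ is a genuine X-network. The two stems $s_1=u_1\,z\,p_1\cdots p_m$ and $s_2=u_2\,z\,q_1\cdots q_m$ together cover all state nodes and intersect only at the state node $z$; since they originate from different input nodes, this covering realizes condition~3) of Corollary~\ref{cor:ctrb_cases} and never condition~4), so $\Sigma_m$ lies in the X class and is not structurally controllable. Because $z$ is the unique shared node, the construction in the proof of Theorem~\ref{thm:hoi} replaces exactly one subsystem (the hub, split into $z^1,z^2$ carrying the two separated stems), producing a structurally controllable $\hat\Sigma$ with homogeneous internal dynamics; by the corollary following Theorem~\ref{thm:hoi} it is structurally output controllable. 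Hence $\hat S=1$, independently of $m$.

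Next I would compute $S$ via Lemma~\ref{lem:min_fo}, i.e.\ $S=n-k$, where $k$ is the maximum number of state nodes coverable by vertex-disjoint stems and cycles. The claim is $k=m+1$: any vertex-disjoint family can route at most one stem through $z$, and since the graph is acyclic with no entry point to either chain other than $z$, one entire chain of $m$ nodes is necessarily left uncovered (it admits neither an input-stem avoiding $z$ nor a cycle). The best cover is thus a single stem $u_i\,z\,(\text{one chain})$ of size $m+1$, giving $S=(2m+1)-(m+1)=m$. Combining, $\Delta=S-\hat S=m-1$, which tends to $\infty$ as $n=2m+1\to\infty$, establishing the corollary.

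The main obstacle is the maximum-cover computation, namely proving rigorously that $k=m+1$ and not something larger. This rests on a bottleneck argument: because $z$ is the sole gateway to both chains and there are no cycles to absorb the orphaned nodes, the two stems are forced to compete for $z$, so in every vertex-disjoint family one chain's $m$ nodes are unavoidably dropped. Formalizing that no cleverer disjoint cover exists (using the single-use constraint at $z$ together with acyclicity) is the crux; the remaining steps are direct invocations of Theorem~\ref{thm:hoi}, its corollary, and Lemma~\ref{lem:min_fo}.
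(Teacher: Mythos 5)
Your proof is correct and follows essentially the same strategy as the paper's: exhibit a one-parameter family of X-networks with a single shared node $z$, so that Theorem~\ref{thm:hoi} yields $\hat S=1$ while Lemma~\ref{lem:min_fo} forces $S$ to grow linearly with $n$. The only difference is the choice of witness (two stems from distinct inputs meeting at a hub, i.e.\ condition~3 of Corollary~\ref{cor:ctrb_cases}, versus the paper's stem--cycle intersection, i.e.\ condition~1), and the maximum-cover step you flag as the crux is immediate since every stem must pass through $z$ and the graph has no cycles.
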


\begin{proof}
Consider an X-network system $\Sigma$ where ${\rm G}(\Sigma)$ consists of a stem  $e_1$ and a cycle $e_2$, intersecting at a single node $z$. Following the same reasoning as in the proof of Theorem \ref{thm:hoi}, replacing the subsystem associated with $z$ with a second-order homogeneous subsystem results in a new system $\hat{\Sigma}$, where $\mathrm{G}(\hat{\Sigma})$ is composed of vertex-disjoint stems and cycles. By Theorem \ref{thm:gen_dim} and Lemma \ref{lem:str_out_ctrb}, $\hat{\Sigma}$ is structurally output controllable, implying $\hat{S} = 1$. In contrast, Lemma \ref{lem:min_fo} ensures that as the number of nodes in $e_1$ and $e_2$ grows, $S \to \infty$, leading to $\Delta \to \infty$.
\end{proof}

\subsection{Making Y-networks structurally output controllable}\label{sec:Y-net}

In this section, we {discuss how} introducing heterogeneous local dynamics is necessary to obtain structural output controllability in Y-networks. In addition, we demonstrate that higher-order heterogeneity provides an advantage over first-order heterogeneity, as measured by the~proposed~index~$\Delta$. 
\begin{proposition}\label{prop:not_str_out_ctrb}
Consider a Y-network $\Sigma$. Any extended $\hat{\Sigma}$ composed of subsystems with {\em homogeneous} internal dynamics is {not} structurally output controllable.
\end{proposition}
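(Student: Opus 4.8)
The plan is to show that the homogeneity constraint, combined with the defining property of a Y-network, forces the extended graph $\mathrm{G}(\hat{\Sigma})$ to inherit the obstruction of type 4) from the original network, and then to invoke the generic dimension result (Theorem \ref{thm:gen_dim}) together with part 2) of Lemma \ref{lem:str_out_ctrb}. The strategy is therefore to bound the generic dimension of the controllable subspace $d_c(\hat{\Sigma})$ strictly below $n$ and conclude non-controllability of the output.

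First I would set up the graph-theoretic picture. Because $\Sigma$ is a Y-network, \emph{every} covering of the state nodes of $\mathrm{G}(\Sigma)$ by vertex-disjoint stems and cycles fails: in fact the network is acyclic (as noted after the definition), so any covering consists only of stems, and some two of these stems must share the same input node (condition 4)). When each subsystem is replaced by a homogeneous higher-order one, the internal dynamics $\hat{a}_i$ are all zero, which means that within each subgraph $\mathrm{G}_i(\hat{\Sigma})$ there are \emph{no} internal edges or self-loops among the lifted nodes of subsystem $i$. The key observation I would make precise is that homogeneity forbids the creation of new cycles \emph{inside} a lifted subsystem, and since the inter-subsystem edges of $\hat{\Sigma}$ respect exactly the same locality pattern as $\Sigma$ (an edge between lifted nodes of $i$ and $j$ exists only if $a_{ij}\ne 0$), the cycle structure of $\mathrm{G}(\hat{\Sigma})$ is governed by that of $\mathrm{G}(\Sigma)$. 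I would argue that $\mathrm{G}(\hat{\Sigma})$ remains acyclic, so any vertex-disjoint covering of its state nodes again uses only stems.

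The heart of the argument is the input bottleneck. The crucial point is that the input nodes $\Uc$ are \emph{not} replaced or expanded: the set of input nodes of $\hat{\Sigma}$ is identical to that of $\Sigma$, and by the structural constraints on $\hat{B}$ an input component may feed a lifted node of subsystem $i$ only if it already fed subsystem $i$ in $\Sigma$. Since the original Y-network admits no covering of its state nodes by stems originating from \emph{distinct} inputs (condition 4) is unavoidable), I would show that in $\mathrm{G}(\hat{\Sigma})$ the same counting obstruction survives: the number of state nodes that can be reached by a family of vertex-disjoint stems is capped by the available ``input capacity,'' and because lifting multiplies the number of state nodes $z^{k}$ requiring coverage while leaving the number of usable stem origins unchanged, at least one lifted state node must be left uncovered in every vertex-disjoint stem-and-cycle family. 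Hence the maximum coverable set $\Zc \subseteq \hat{\Vc}$ has $|\Zc| < \hat{n}$, and moreover I would verify that it falls short of $n$ in the output-controllability sense.

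Concretely, I would conclude by Theorem \ref{thm:gen_dim} that $d_c(\hat{\Sigma})$ equals this maximum coverable size, which is strictly less than $\hat{n}$; the sharper claim needed is $d_c(\hat{\Sigma}) < n$, obtained from the pigeonhole bound that the deficiency in coverage caused by the shared-input obstruction is not repaired by homogeneous lifting. Then part 2) of Lemma \ref{lem:str_out_ctrb} immediately gives that $\hat{\Sigma}$ is not structurally output controllable, completing the proof. I expect the main obstacle to be the second step: rigorously formalizing why homogeneous lifting cannot convert the input-sharing obstruction (condition 4)) into a disjoint covering, i.e., establishing the precise matching/counting bound that the number of vertex-disjoint input-rooted stems in $\mathrm{G}(\hat{\Sigma})$ is still too small to cover all $n$ (or all $\hat{n}$) state nodes. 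This will likely require a careful argument, perhaps via a flow or König-type matching bound on the bipartite-like structure between inputs and reachable state nodes, showing the obstruction is invariant under the homogeneous expansion.
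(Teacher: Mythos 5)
Your overall strategy coincides with the paper's: show that $\mathrm{G}(\hat{\Sigma})$ stays acyclic under homogeneous lifting, so only stems can cover state nodes, bound the generic dimension $d_c(\hat{\Sigma})$ strictly below $n$, and invoke part 2) of Lemma \ref{lem:str_out_ctrb}. However, there is a genuine gap exactly where you flag ``the main obstacle'': the decisive counting bound is never actually established, and the appeal to ``input capacity'' plus a pigeonhole is not yet a proof. In particular, it is not a priori obvious that a single stem in $\mathrm{G}(\hat{\Sigma})$ cannot cover \emph{more} state nodes than any stem in $\mathrm{G}(\Sigma)$ by weaving through several lifted copies of the subsystems it traverses; if it could, one stem per input might suffice to cover $n$ state nodes in the extended graph even though it could not in the original one, and your ``deficiency is not repaired'' claim would fail.

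The paper closes this gap with two elementary observations; no flow or K\"onig-type matching argument is needed. First, since stems begin at input vertices, two stems are vertex-disjoint only if they originate from distinct inputs, so any vertex-disjoint family of stems in $\mathrm{G}(\hat{\Sigma})$ contains at most one stem per input; moreover, because condition 3) of Corollary \ref{cor:ctrb_cases} never occurs in a Y-network, each subgraph $\mathrm{G}_i(\hat{\Sigma})$ is reachable from only one input. Second --- and this is the step your sketch is missing --- acyclicity of $\mathrm{G}(\Sigma)$ at the subsystem level together with the absence of intra-subsystem edges (homogeneity) implies that a stem in $\mathrm{G}(\hat{\Sigma})$ visits each subgraph $\mathrm{G}_i(\hat{\Sigma})$ at most once, since revisiting a subsystem would project onto a cycle among the subsystems of $\mathrm{G}(\Sigma)$. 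Hence every stem of $\mathrm{G}(\hat{\Sigma})$ projects onto a stem of $\mathrm{G}(\Sigma)$ of the same length, and the number of state nodes coverable by vertex-disjoint stems in $\mathrm{G}(\hat{\Sigma})$ is at most $d_c(\Sigma)<n$. Once you make these two points explicit, the rest of your argument goes through essentially as written.
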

{
\begin{proof}
    As outlined in Section \ref{sec:charact}, the Y-network $\Sigma$ is acyclic and not structurally controllable. Consequently, by Theorem \ref{thm:gen_dim}, {the generic dimension of the controllable subspace of $\Sigma$,} $d_c({\Sigma}),$ is equal to the maximum size of a set $\Zc \subseteq \Vc$ that can be covered by vertex-disjoint stems in ${\rm G}(\Sigma)$ and $|\Zc| < n$. {Being disjoint, the number of these stems must be at most equal to the numbers of inputs, and for each input a stem of maximum length must be considered.}
    
    {Proving the statement, given genericity of controllability, is equivalent to show that a structured extended system $\hat\Sigma,$ with the constraints of having homogeneous (trivial) internal dynamics and satisfying the locality bounds, is not structurally controllable.} The constraints defined in Section \ref{sec:hod} imply that  $\hat{a}_{ij} \neq 0$ only if $a_{ij} \neq 0$ (generically), and hence ensure that state nodes in ${\rm G}_i(\hat{\Sigma})$ and ${\rm G}_j(\hat{\Sigma})$ are connected only if state nodes in ${\rm G}_i({\Sigma})$ and ${\rm G}_j({\Sigma})$ 
    are also (generically) connected. Similarly (since $\hat{b}_{i,s} \neq 0$ only if ${b}_{i,s} \neq 0$), the state nodes in ${\rm G}_i(\hat{\Sigma})$ are connected to the $s$-th input node only if the state node in ${\rm G}_i({\Sigma})$ is connected to the  $s$-th input node. {Moreover, since the internal dynamics of each extended subsystem is homogeneous (trivial), there are no edges between nodes within ${\rm G}_i(\hat{\Sigma})$.} {Therefore, the constraints on the interconnections and the homogeneity of internal dynamics imply that ${\rm G}(\hat{\Sigma})$ is also acyclic.} {Hence, as argued above for ${\rm G}({\Sigma}),$ the generic dimension of {the} controllability subspace of {$\hat{\Sigma}$} is equivalent to the largest number of nodes that can be covered by {\em disjoint stems} in {${\rm G}(\hat{\Sigma})$}.}  
    
    Since, by definition of Y-network, condition 3 in Corollary 1 is never met for ${\rm G}({\Sigma})$, all stems originating from different input nodes are vertex-disjoint, meaning {the state node $x_i$ in ${\rm G}_i(\Sigma)$ is only part of stems originating from the same input node}.
    {This, {with the imposed constraints on interconnections}, implies that all stems covering  state nodes in ${\rm G}_i(\hat{\Sigma})$ are originated from the same input node.}
    {Furthermore, a stem in ${\rm G}(\hat{\Sigma})$ can have state nodes belonging to ${\rm G}_{i_1}(\hat{\Sigma}), \dots, {\rm G}_{i_k}(\hat{\Sigma})$ only if there exists a stem in ${\rm G}(\Sigma)$ whose state nodes belong to ${\rm G}_{i_1}(\Sigma), \dots, {\rm G}_{i_k}(\Sigma)$. It then follows that two stems in ${\rm G}(\hat{\Sigma})$ are vertex-disjoint if and only if their counterparts in ${\rm G}(\Sigma)$ are originated by different input nodes.}
    
    {Also, observe that}, given acyclicity and homogeneity of internal dynamics, each stem in ${\rm G}(\hat{\Sigma})$ covers at most one node per ${\rm G}_{i}(\hat{\Sigma})$.  Consequently, for every stem in ${\rm G}(\hat{\Sigma})$ there exists a stem in ${\rm G}(\Sigma)$ of the same length. 
    
    Thus, by Theorem \ref{thm:gen_dim}, $d_c(\hat{\Sigma})$ is determined by the nodes covered by a set of vertex-disjoint stems, which, as we proved above, are at most one for each input, and of length smaller or equal to a corresponding one in the original graph. Hence, the total number of nodes covered by disjoint stems in the extended graph cannot be larger than that of the original graph, which corresponds to $d_c({\Sigma})<n$. Hence, by Lemma \ref{lem:str_out_ctrb}, $\hat{\Sigma}$ cannot be structurally output controllable.
\end{proof}
}

In view of the previous proposition, we consider replacing a subset of systems in $\Sigma$ with higher-order subsystems in $\hat{\Sigma}$ with \textit{heterogeneous internal dynamics}. The dynamics of each modified subsystem
is described by \eqref{eq:subsys-exp} where $\hat{a}_i$ is a generic matrix, the induced overall system $\hat{\Sigma}$ is described by \eqref{eq:netsys-exp}.

The following theorem gives bounds on the minimum number of higher-order subsystems that need to be introduced for the considered class of networks.
\begin{theorem}
\label{prop:bounds}
Consider a Y-network $\Sigma$. Let $\hat{n}_M={\rm max}_i \ \hat{n}_i$
and $\Zc \subseteq{\mathcal{V}}$ be a maximum cardinality set of nodes covered by vertex-disjoint stems 
in ${\rm G}(\Sigma).$ 
The minimum number $\hat{S}_{\min}$ of subsystems to be turned into higher-order heterogeneous ones in ${\rm G}(\hat\Sigma)$ to obtain structural output controllability satisfies
        \begin{align}\label{eq:bounds}
            \frac{n-|\Zc|}{\hat{n}_M} \le \hat{S}_{\min} \le n-|\Zc|.
            \end{align}
\end{theorem}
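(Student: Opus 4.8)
The plan is to establish the two inequalities in \eqref{eq:bounds} separately, using Lemma~\ref{lem:min_fo} as the benchmark for the first-order case and the structural constraints of $\hat\Sigma$ together with Theorem~\ref{thm:gen_dim} throughout. For the upper bound $\hat{S}_{\min}\le n-|\Zc|$, I would argue that the first-order heterogeneous construction is a special case of a higher-order heterogeneous one: replacing each of the $n-|\Zc|$ subsystems outside $\Zc$ with a heterogeneous subsystem (even of order one, $\hat n_i=1$) already yields a structurally output controllable $\hat\Sigma$ by Lemma~\ref{lem:min_fo}. Since higher-order heterogeneous dynamics is a generalization that includes first-order heterogeneous dynamics as the case $\hat n_i=1$, the minimum over the richer class can be no larger; hence $\hat{S}_{\min}\le n-|\Zc|$. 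This direction is essentially immediate.

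The lower bound $\hat{S}_{\min}\ge (n-|\Zc|)/\hat n_M$ is the substantive part and requires a counting argument. The idea is to bound the generic dimension $d_c(\hat\Sigma)$ of the controllability subspace from above in terms of the number $\hat S$ of modified subsystems, and then invoke part~2) of Lemma~\ref{lem:str_out_ctrb}, which requires $d_c(\hat\Sigma)\ge n$ for structural output controllability. I would partition the state nodes of $\mathrm{G}(\hat\Sigma)$ into those lying in modified (higher-order) subsystems and those in unmodified ones. An unmodified subsystem $\mathrm{G}_i(\hat\Sigma)$ consists of a single node with trivial internal dynamics, whereas a modified one contributes at most $\hat n_i\le \hat n_M$ nodes. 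By Theorem~\ref{thm:gen_dim}, $d_c(\hat\Sigma)$ equals the maximum number of state nodes coverable by vertex-disjoint stems and cycles. The key observation, borrowing the acyclicity/homogeneity reasoning of Proposition~\ref{prop:not_str_out_ctrb}, is that the nodes inside unmodified subsystems still behave like the original Y-network: they cannot contribute more covered nodes than $|\Zc|$ would allow among themselves, because the locality constraints and the absence of internal self-loops on unmodified nodes prevent creating new disjoint-stem coverage beyond that of $\Sigma$. Thus the covered nodes decompose into a contribution of at most $|\Zc|$ from the unmodified structure plus the nodes internal to the $\hat S$ modified subsystems, of which there are at most $\hat S\,\hat n_M$.

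Carrying this through, structural output controllability forces
\begin{align*}
    n \le d_c(\hat\Sigma) \le |\Zc| + \hat S\,\hat n_M,
\end{align*}
which rearranges to $\hat S \ge (n-|\Zc|)/\hat n_M$, and taking the minimum over valid constructions gives the stated lower bound. The main obstacle I anticipate is making the inequality $d_c(\hat\Sigma)\le |\Zc|+\hat S\,\hat n_M$ rigorous: one must argue carefully that replacing a node by a higher-order block cannot increase the coverage attributable to the \emph{unmodified} part beyond $|\Zc|$, and that every extra node counted beyond $|\Zc|$ must reside inside some modified subsystem. This likely reuses the central argument of Proposition~\ref{prop:not_str_out_ctrb}, namely that in the homogeneous (unmodified) portion the disjoint-stem structure is inherited from $\Sigma$, so the only way to enlarge the covered set is through the additional internal dimensions supplied by the $\hat S$ higher-order subsystems. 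The delicate bookkeeping is ensuring each modified subsystem is charged at most $\hat n_M$ nodes in the disjoint cover, which follows since its block contains exactly $\hat n_i\le\hat n_M$ state nodes.
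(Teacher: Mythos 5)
Your proposal is correct and follows essentially the same route as the paper: the upper bound comes from replacing the $n-|\Zc|$ subsystems outside $\Zc$ (the paper does this with higher-order heterogeneous blocks whose self-loops cover the extra nodes, which is equivalent to your reduction to Lemma~\ref{lem:min_fo}), and the lower bound comes from the counting estimate $n\le d_c(\hat\Sigma)\le |\Zc|+\hat n_M\hat S$ combined with part~2) of Lemma~\ref{lem:str_out_ctrb}. The paper's bookkeeping is organized slightly differently (it counts one node per subsystem touched by a stem, bounds that count by $|\Zc|$ via the stem-correspondence argument of Proposition~\ref{prop:not_str_out_ctrb}, and charges the remaining nodes to the $\hat S$ modified blocks), but this is the same decomposition you describe, and the delicate step you flag is resolved exactly as you anticipate.
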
 
  \begin{proof}
First notice that all the nodes associated with higher-order subsystems belong to a set of stems and cycles $\hat{\Pc}$ covering a maximum number of state nodes in ${\rm G}(\hat{\Sigma})$. Indeed,  since $\Sigma$ is a Y-network, and therefore acyclic, the constraints on the interconnections of $\hat{\Sigma}$ in Section \ref{sec:hod} impose that the only cycles in the network are composed by nodes in heterogeneous higher-order subsystems.
Moreover, each node associated with a heterogeneous higher-order subsystem has a self-loop. Therefore, for each cycle in the network there exists a set of self-loops covering the same nodes. This implies that a set $\hat{\Pc}$ of vertex-disjoint stem and cycles, which covers the maximum number of state nodes, is obtained by considering vertex-disjoint stems of maximum length in ${\rm G}(\hat{\Sigma})$ and self-loops corresponding to nodes not already considered in a stem. Therefore $\hat{\Pc}$ covers all nodes corresponding to higher-order subsystems.

Let $\Zc_a$ and $\Zc_c$ be, respectively, the sets of subsystems of $\hat{\Sigma}$ with state nodes in ${\rm G}(\hat{\Sigma})$ belonging to stems and having self-loops in $\hat{\Pc}$. Notice that $\Zc_a$, $\Zc_c$ are, in general, not disjoint as some nodes associated with a higher-order subsystem may belong to a stem in $\hat{\Pc}$, while others may have a self-loop in $\hat{\Pc}$. Moreover, first-order subsystems may belong only to $\Zc_a$. By Theorem \ref{thm:gen_dim}, the generic dimension of the controllable subspace is then given by the number of state nodes covered by $\hat{\Pc}$:
  \begin{align}\label{eq:lower-bound_new}
        d_c(\hat{\Sigma})= |\Zc_a|\! +\!\sum_{i\in \Zc_a} (\hat{n}_i-1) + \!\!\sum_{i\in \Zc_c {\setminus{\Zc_a}}}\!\!\hat{n}_i \leq |\Zc_a|\!+\!\hat{n}_M \hat{S}
  \end{align}
  where $\hat{S}$ is the number of higher-order subsystems in $\hat{\Sigma}$.
    
    The constraints on the interconnections impose a stem in ${\rm G}(\hat{\Sigma})$ can have state nodes belonging to ${\rm G}_{i_1}(\hat{\Sigma}), \dots, {\rm G}_{i_k}(\hat{\Sigma})$ only if there exists a stem in ${\rm G}(\Sigma)$ whose state nodes belong to ${\rm G}_{i_1}(\Sigma), \dots, {\rm G}_{i_k}(\Sigma)$. {Moreover, since $\Sigma$ does not satisfy condition 3 in Corollary 1, the constraints on interconnections implies stems in ${\rm G}(\hat{\Sigma})$ are vertex-disjoint only if their counterparts in ${\rm G}(\Sigma)$ are.} This implies $|\Zc_a|\leq |\Zc|$. 
    For $\hat{\Sigma}$ to be structurally output controllable, it must satisfy $d_c(\hat{\Sigma}) \geq n$ (Lemma \ref{lem:str_out_ctrb}). Combining this condition with \eqref{eq:lower-bound_new}, we obtain 
    \begin{align*}
        \hat{S} \geq \frac{n - |\Zc|}{\hat{n}_M},
    \end{align*}
   which leads to the lower bound in \eqref{eq:bounds}.
   
   If we replace subsystems corresponding to nodes {\textit{not}} in ${\Zc}$ with higher-order heterogeneous subsystems, ${\rm G}(\hat{\Sigma})$ is covered by vertex-disjoint stems and cycles. Therefore, by Theorem \ref{thm:gen_dim}, $\hat{\Sigma}$ is structurally controllable and by Lemma \ref{lem:str_out_ctrb} it is also structurally output controllable. In total, we have added $\hat{S}=n-|\Zc|$ higher-order subsystems leading to~the~upper~bound~in~\eqref{eq:bounds}.
     \end{proof}
     
Note that, in Theorem \ref{prop:bounds}, $n-|\Zc|$ equals the minimum number of first-order heterogeneous subsystems required to achieve structural (output) controllability. Therefore, the lower bound in the same theorem suggests that structural output controllability could be achieved with fewer higher-order subsystems than first-order heterogeneous ones, leading to an advantage in terms of the index $\Delta$ in \eqref{eq:Delta}. In the following, we show that this advantage is achievable for two classes of Y-networks. To this end, we will exploit some instrumental results that complement those in \cite{BD-JL-MJ:23} to assess output controllability using PBH-like tests. These results are presented and discussed~in~Appendix~\ref{sec:out_PBH}.
\subsubsection{Case study I. Binary tree network}
We consider the structured system described by the binary tree network ${\rm G}(\Sigma)$ with height $h$ of Fig.~\ref{fig:tree}(a). 
\begin{figure}[!h]
\begin{center}
\resizebox{0.41\textwidth}{!}{
\begin{tikzpicture}[node_style/.style={draw, circle,thick, fill=white, minimum size=0.435cm,font=\footnotesize}]
\begin{scope}
\node[node_style,inner sep=0.01cm] (1) at (0,0) {$1$};
\node[node_style,inner sep=0.01cm] (2) at (-0.7,-0.75) {$2$};
\node[node_style,inner sep=0.01cm] (3) at (0.7,-0.75) {$3$};
\node[node_style,inner sep=0.01cm] (4) at (-1.3,-1.5) {$4$};
\node[node_style,inner sep=0.01cm] (5) at (-0.3,-1.5) {$5$};
\node[node_style,inner sep=0.01cm] (6) at (0.3,-1.5) {$6$};
\node[node_style,inner sep=0.01cm] (7) at (1.3,-1.5) {$7$};
\node[node_style,inner sep=0.01cm]  at (-1.55,-2.5) {};
\node[node_style,inner sep=0.01cm]  at (-1.05,-2.5) {};
\node[node_style,inner sep=0.01cm]  at (0.5,-2.5) {};
\node[node_style,inner sep=0.01cm]  at (1.05,-2.5) {};
\node[node_style,inner sep=0.01cm]  at (1.55,-2.5) {};
\node  at (-0.25,-2.5) {\dots};
\node[node_style,inner sep=0.01cm,fill=inputcolor] (u) at (0,1) {$u$};
\node[inner sep=0cm] at (2.5,-1.25) {\ };

\draw[-stealth,semithick,line join=round,
semithick,
decorate, decoration={
    zigzag,
    segment length=4,
    amplitude=.9,post=lineto,
    post length=5pt
}] (u) -- (1);
\draw[-stealth,semithick] (1) -- (2);
\draw[-stealth,semithick] (1) -- (3);
\draw[-stealth,semithick] (2) -- (4);
\draw[-stealth,semithick] (2) -- (5);
\draw[-stealth,semithick] (3) -- (6);
\draw[-stealth,semithick] (3) -- (7);
\draw[dashed,semithick] (4) -- (-1.5,-2.25);
\draw[dashed,semithick] (4) -- (-1.1,-2.25);
\draw[dashed,semithick] (5) -- (-0.5,-2.25);
\draw[dashed,semithick] (5) -- (-0.1,-2.25);
\draw[dashed,semithick] (6) -- (0.5,-2.25);
\draw[dashed,semithick] (6) -- (0.1,-2.25);
\draw[dashed,semithick] (7) -- (1.5,-2.25);
\draw[dashed,semithick] (7) -- (1.1,-2.25);

\node at (-3.5,0.75) {(a)};
\end{scope}

\begin{scope}[yshift=-4.25cm]
    
\draw[draw=red,line width=0.01,fill=BrickRed!10] (0,0) ellipse (1 and 0.325);
\node[node_style,inner sep=0.01cm] (1) at (-0.4,0) {$1$};
\node[node_style,inner sep=0.01cm] (1b) at (0.4,0) {$2$};
\draw[draw=red,line width=0.01,fill=BrickRed!10] (-1,-0.75) ellipse (1 and 0.325);
\node[node_style,inner sep=0.01cm] (2) at (-1.4,-0.75) {$3$};
\node[node_style,inner sep=0.01cm] (2b) at (-0.6,-0.75) {$4$};
\draw[draw=red,line width=0.01,fill=BrickRed!10] (1.1,-0.75) ellipse (1 and 0.325);
\node[node_style,inner sep=0.01cm] (3) at (0.7,-0.75) {$5$};
\node[node_style,inner sep=0.01cm] (3b) at (1.5,-0.75) {$6$};
\draw[draw=red,line width=0.01,fill=BrickRed!10] (-3.1,-1.5) ellipse (1 and 0.325);
\node[node_style,inner sep=0.01cm] (4) at (-3.5,-1.5) {$7$};
\node[node_style,inner sep=0.01cm] (4b) at (-2.7,-1.5) {$8$};
\draw[draw=red,line width=0.01,fill=BrickRed!10] (-1.05,-1.5) ellipse (1 and 0.325);
\node[node_style,inner sep=0.01cm] (5) at (-1.45,-1.5) {$9$};
\node[node_style,inner sep=0.01cm] (5b) at (-0.65,-1.5) {$10$};
\draw[draw=red,line width=0.01,fill=BrickRed!10] (1.05,-1.5) ellipse (1 and 0.325);
\node[node_style,inner sep=0.01cm] (6) at (0.65,-1.5) {$11$};
\node[node_style,inner sep=0.01cm] (6b) at (1.45,-1.5) {$12$};
\draw[draw=red,line width=0.01,fill=BrickRed!10] (3.1,-1.5) ellipse (1 and 0.325);
\node[node_style,inner sep=0.01cm] (7) at (2.7,-1.5) {$13$};
\node[node_style,inner sep=0.01cm] (7b) at (3.5,-1.5) {$14$};
\node[node_style,inner sep=0.01cm]  at (-3.5,-2.5) {};
\node[node_style,inner sep=0.01cm]  at (-2.7,-2.5) {};
\node[node_style,inner sep=0.01cm]  at (1.45,-2.5) {};
\node[node_style,inner sep=0.01cm]  at (2.7,-2.5) {};
\node[node_style,inner sep=0.01cm]  at (3.5,-2.5) {};
\node  at (0,-2.5) {\dots};
\node[node_style,inner sep=0.01cm,fill=inputcolor] (u) at (0,1) {$u$};

\draw[-stealth,semithick,line join=round,
semithick,
decorate, decoration={
    zigzag,
    segment length=4,
    amplitude=.9,post=lineto,
    post length=5pt
}] (u) -- (1);
\draw[-stealth,semithick,line join=round,
semithick,
decorate, decoration={
    zigzag,
    segment length=4,
    amplitude=.9,post=lineto,
    post length=5pt
}] (u) -- (1b);

\draw[-stealth,semithick] (1) to[in=205,out=155,loop,looseness=5] (1);
\draw[-stealth,semithick] (1b) to[in=-25,out=25,loop,looseness=5] (1b);

\draw[-stealth,semithick] (3) to[in=205,out=155,loop,looseness=5] (3);
\draw[-stealth,semithick] (3b) to[in=-25,out=25,loop,looseness=5] (3b);

\draw[-stealth,semithick] (2) to[in=205,out=155,loop,looseness=5] (2);
\draw[-stealth,semithick] (2b) to[in=-25,out=25,loop,looseness=5] (2b);

\draw[-stealth,semithick] (4) to[in=205,out=155,loop,looseness=5] (4);
\draw[-stealth,semithick] (4b) to[in=-25,out=25,loop,looseness=5] (4b);

\draw[-stealth,semithick] (5) to[in=205,out=155,loop,looseness=5] (5);
\draw[-stealth,semithick] (5b) to[in=-25,out=25,loop,looseness=5] (5b);

\draw[-stealth,semithick] (6) to[in=205,out=155,loop,looseness=5] (6);
\draw[-stealth,semithick] (6b) to[in=-25,out=25,loop,looseness=5] (6b);

\draw[-stealth,semithick] (7) to[in=205,out=155,loop,looseness=5] (7);
\draw[-stealth,semithick] (7b) to[in=-25,out=25,loop,looseness=5] (7b);

\draw[-stealth,semithick] (1) to[bend left=20] (1b);
\draw[-stealth,semithick] (1b) to[bend left=20] (1);
\draw[-stealth,semithick] (2) to[bend left=20] (2b);
\draw[-stealth,semithick] (2b) to[bend left=20] (2);
\draw[-stealth,semithick] (3) to[bend left=20] (3b);
\draw[-stealth,semithick] (3b) to[bend left=20] (3);
\draw[-stealth,semithick] (4) to[bend left=20] (4b);
\draw[-stealth,semithick] (4b) to[bend left=20] (4);
\draw[-stealth,semithick] (5) to[bend left=20] (5b);
\draw[-stealth,semithick] (5b) to[bend left=20] (5);
\draw[-stealth,semithick] (6) to[bend left=20] (6b);
\draw[-stealth,semithick] (6b) to[bend left=20] (6);
\draw[-stealth,semithick] (7) to[bend left=20] (7b);
\draw[-stealth,semithick] (7b) to[bend left=20] (7);
\draw[-stealth,semithick] (1) -- (2);
\draw[-stealth,semithick] (1b) -- (3);
\draw[-stealth,semithick] (2) to[bend right=20] (4);
\draw[-stealth,semithick] (2b) -- (5);
\draw[-stealth,semithick] (3) -- (6);
\draw[-stealth,semithick] (3b) to[bend left] (7);
\draw[dashed,semithick] (4) -- (-3.5,-2.25);
\draw[dashed,semithick] (4b) -- (-2.7,-2.25);
\draw[dashed,semithick] (5) -- (-1.45,-2.25);
\draw[dashed,semithick] (5b) -- (-0.65,-2.25);
\draw[dashed,semithick] (6b) -- (1.45,-2.25);
\draw[dashed,semithick] (6) -- (0.65,-2.25);
\draw[dashed,semithick] (7b) -- (3.5,-2.25);
\draw[dashed,semithick] (7) -- (2.7,-2.25);

\node at (-3.5,0.75) {(b)};

\end{scope}
\end{tikzpicture}
}
\end{center}
\vspace{-1cm}
\caption{(a) Binary tree network of height $h$.  The red ellipses denote the nodes associated to a higher-order subsystem in ${\rm G}(\hat{\Sigma})$, which include all nodes of the tree except for the leaves (terminal nodes). The state nodes $x_i\in\mathcal{V}$ are denoted simply by $i$ to reduce cluttering.}
\vspace{-0.25cm}
\label{fig:tree}
\end{figure}
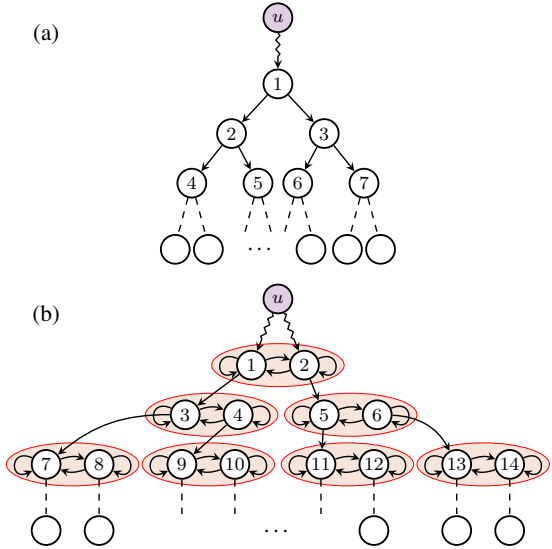
This system is not structurally controllable by Theorem \ref{thm:gen_dim}, since the maximum-cardinality set of nodes $\Zc$ spanned vertex-disjoint stems and cycles is a stem with $h+1$ nodes in $\Vc$. By Theorem \ref{thm:gen_dim}, to make the network structurally controllable, the minimum number of first-order heterogeneous subsystems that have to be added is $\sum_{i=0}^{h} 2^{i} - (h+1)=2^{h+1}-(h+2)$, namely one for each subsystem whose corresponding node in ${\rm G}(\Sigma)$ does not belong to $\Zc$.
Consider the extended binary tree network of Fig.~\ref{fig:tree}(b) which is obtained by replacing each first-order subsystem in ${\Sigma}$ with an higher-order subsystem of dimension $\hat{n}_M=2$ except for subsystems associated to the $2^{h}$ leaves (terminal nodes). The extended network system $\hat{\Sigma}$ contains $\hat{S}=\sum_{i=0}^{h-1} 2^{i}=2^{h}-1$ higher-order heterogeneous subsystems.

With reference to the labeling of nodes of Fig.~\ref{fig:tree}(b), the state matrix $\hat{A}$ of the extended network has the block lower triangular form
    
    \vspace{-10px}
    \begin{align*}
        \hat{A}=\left[\begin{array}{c|c}
        A^{(1)} & 0\\
        \hline 
        A^{(2)} & 0
         \end{array}\right].
    \end{align*}\vspace{-2px}
Here, 
$
A^{(2)}=\left[\begin{array}{ccccc} 0 & I_{\star}
 \end{array}\right]\in\mathbb{R}^{2^{h}\times(2^{h+1}-2)},
$
with $I_{\star}$ being a $2^{h}\times 2^{h}$ diagonal matrix with generic diagonal entries, contains the weights of the edges pointing to the terminal nodes of the tree and 
 $A^{(1)}\in\mathbb{R}^{(2^{h+1}-2)\times(2^{h+1}-2)}$ is a block $2\times 2$ lower triangular matrix
$$
A^{(1)}=\left[\begin{array}{ccccc}A^{(1)}_{11} & 0 & \cdots & 0\\ 
A^{(1)}_{21} & A^{(1)}_{22} & \ddots &  \vdots\\
\vdots & \ddots& \ddots & 0\\
A^{(1)}_{2^{h}-1,1} & \cdots& \cdots & A^{(1)}_{2^{h}-1,2^{h}-1}\!\!
 \end{array}\right].
$$
where\footnote{The entries marked with $\star$ are considered as generic.}\vspace{-2px}
\begin{align}\label{eqn:tree_off_diag}
A^{(1)}_{ij}&=\begin{cases}
\left[\begin{smallmatrix}
    \star & \star \\ \star & \star
\end{smallmatrix}\right] & \text{ if } i=j,\\[0.1cm]
\left[\begin{smallmatrix}\star & 0 \\ 0 & 0\end{smallmatrix}\right] & \text{ if } i=\min \{i: (j,i)\in\mathcal{E}_v \},\\[0.1cm]
\left[\begin{smallmatrix}0& \star \\ 0 & 0\end{smallmatrix}\right]& \text{ if }  i=\max \{i: (j,i)\in\mathcal{E}_v \},\\[0.1cm] 
0 & \text{ otherwise}.
\end{cases}
\end{align}\vspace{-2px}

Moreover, the input matrix $\hat B$ is such that $\hat b_1\in\mathbb{R}^2$ is generic and $\hat b_i=0$ for $i\ne 1$, and the output matrix $\hat C$ is such that $\hat  c_i\in\mathbb{R}^{1\times 2}$  have generic entries for $i\in\{1,\dots,2^h-1\}$  and $\hat c_i=1$ otherwise.

 The following proposition shows that the extended binary tree network is structurally output controllable, which implies that $\Delta= 2^h-(h+1)$. Note that, if $h\geq2$, then $\Delta>0$, confirming the advantage of introducing higher-order heterogeneous subsystems for this class of Y-networks.

\begin{proposition}
    The extended binary tree network described above (see Fig. \ref{fig:tree}(b)) is structurally output controllable.
\end{proposition}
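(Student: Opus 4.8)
The plan is to prove structural output controllability directly, by exhibiting a generic realization for which the output controllability matrix $\hat{\Rc}_o=\hat{C}\hat{\Rc}$ has full row rank $n=2^{h+1}-1$. The starting point is the block lower-triangular form of $\hat{A}$: writing the state as (non-leaf states, leaf states), one checks by induction that $\hat{A}^k\hat{B}=[\,((A^{(1)})^k B^{(1)})^\top\ \ (A^{(2)}(A^{(1)})^{k-1}B^{(1)})^\top\,]^\top$ for $k\ge 1$, since the leaf states never feed back (the last block column of $\hat{A}$ is zero). Applying $\hat{C}=\mathrm{diag}(C^{(1)},I)$ gives an explicit column-by-column description of $\hat{\Rc}_o$, and I would then study its left null space: a covector $\eta=(\eta_1,\eta_2)$ annihilates every column of $\hat{\Rc}_o$ if and only if $\eta_1^\top C^{(1)}B^{(1)}=0$ and $(\eta_1^\top C^{(1)}A^{(1)}+\eta_2^\top A^{(2)})(A^{(1)})^{k}B^{(1)}=0$ for all $k\ge 0$. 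Showing that $\eta=0$ is the only solution is equivalent to the claim; this is the output-controllability analogue of a PBH computation, in the spirit of the PBH-type tests discussed in the Appendix and complementing \cite{BD-JL-MJ:23}.

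The second ingredient is the controllability of the non-leaf block $(A^{(1)},B^{(1)})$, which I would establish through Theorem~\ref{thm:gen_dim}. The non-leaf nodes form a binary tree of second-order subsystems, each carrying a generic self-loop (the diagonal of $A^{(1)}_{jj}$), so all non-leaf state nodes can be covered by a single input stem together with vertex-disjoint self-loops, and the block is structurally, hence generically, controllable. Consequently the reachable subspace of $(A^{(1)},B^{(1)})$ is the entire non-leaf state space, and the second family of annihilation conditions collapses to the single linear identity $\eta_1^\top C^{(1)}A^{(1)}+\eta_2^\top A^{(2)}=0$.

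The decisive step exploits the sparsity of $A^{(2)}=[\,0\ \ I_\star\,]$, whose support lies entirely on the deepest non-leaf subsystems. Restricting the identity above to the upper non-leaf coordinates removes the $\eta_2$-term, leaving, for every non-leaf node $j$ above the deepest level, $\eta_{1,j}\,\hat{c}_j A^{(1)}_{jj}+[\,\eta_{1,L(j)}c_{L(j),1}\star,\ \eta_{1,R(j)}c_{R(j),1}\star\,]=0$, with $L(j),R(j)$ the two children of $j$ and the $\star$ denoting the generic entries in \eqref{eqn:tree_off_diag}. Together with $\eta_1^\top C^{(1)}B^{(1)}=\eta_{1,\mathrm{root}}\,\hat{c}_{\mathrm{root}}\hat{b}_1=0$, which forces $\eta_{1,\mathrm{root}}=0$ for generic $\hat{c}_{\mathrm{root}},\hat{b}_1$, this sets up a top-down induction: once $\eta_{1,j}=0$, the two scalar components of the identity at $j$ read $\eta_{1,L(j)}c_{L(j),1}\star=0$ and $\eta_{1,R(j)}c_{R(j),1}\star=0$, forcing $\eta_{1,L(j)}=\eta_{1,R(j)}=0$ for generic output and edge weights. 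Since every non-leaf node other than the root is the child of an upper node, all of $\eta_1$ vanishes; then $\eta_2^\top A^{(2)}=0$ and the invertibility of the generic diagonal $I_\star$ yield $\eta_2=0$.

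I expect the main obstacle to be bookkeeping rather than conceptual: one must set up the left-null-space conditions carefully against the precise block pattern in \eqref{eqn:tree_off_diag}, and then verify that the finitely many non-vanishing conditions used in the induction (namely $\hat{c}_{\mathrm{root}}\hat{b}_1\ne 0$, the products $c_{L(j),1}\star\ne 0$ and $c_{R(j),1}\star\ne 0$ at each node, and the controllability of $(A^{(1)},B^{(1)})$) can hold simultaneously. As each is the nonvanishing of a nonzero polynomial in the free parameters, a generic realization satisfies all of them at once, which is precisely what structural output controllability requires. The low-height case $h=1$ (where there are no upper nodes, so condition $\eta_{1,\mathrm{root}}=0$ together with invertibility of $I_\star$ already forces $\eta=0$) should be recorded as the base of the induction.
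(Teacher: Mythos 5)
Your proof is correct, but it follows a genuinely different route from the paper's. The paper does not argue on the output controllability matrix at all: it fixes a \emph{specific} numerical realization in which the diagonal blocks are given the zero pattern $\left[\begin{smallmatrix}0&\star\\\star&0\end{smallmatrix}\right]$ (so that each $A^{(1)}_{ii}$ has two distinct nonzero eigenvalues and $\bar A$ is diagonalizable with $\bar\lambda=0$ as the only uncontrollable eigenvalue), sets $\hat b_1=[1\ 0]^\top$ and $\hat c_i=[1\ 0]$, and then invokes the PBH-type test of Corollary~\ref{cor:PBH}: output controllability reduces to the single condition $\rank \bar C[\bar\lambda I-\bar A\ \ \bar B]=\rank \bar C[\bar B\ \ \bar A]=n$, which is checked by observing that $\bar C[\bar B\ \ \bar A]=[T\ \ 0]$ with $T$ triangular and nonzero on the diagonal. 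You instead keep a generic realization, characterize the left null space of $\hat C\hat{\Rc}$ via the block lower-triangular splitting into non-leaf and leaf states, use Theorem~\ref{thm:gen_dim} to get generic controllability of $(A^{(1)},B^{(1)})$ (correct: the input stem into the root plus the generic self-loops from the diagonal of each $A^{(1)}_{jj}$ cover all non-leaf nodes disjointly), collapse the annihilation conditions to $\eta_1^\top C^{(1)}B^{(1)}=0$ and $\eta_1^\top C^{(1)}A^{(1)}+\eta_2^\top A^{(2)}=0$, and kill $\eta$ by a top-down induction on the tree exploiting the sparsity pattern \eqref{eqn:tree_off_diag} and of $A^{(2)}=[0\ \ I_\star]$. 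Both arguments are sound. The paper's version is shorter and serves to showcase the appendix machinery (at the price of verifying its hypotheses: diagonalizability, uniqueness of the uncontrollable eigenvalue, and $\mathcal N\cap\ker B^\top\subseteq \im C^\top$), whereas yours is self-contained, works for generic rather than hand-picked parameters, and makes explicit which finitely many polynomial non-vanishings are actually needed; the cost is the heavier bookkeeping you anticipate, all of which checks out, including the $h=1$ base case.
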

\begin{proof}
Consider a numerical realization $\bar{A}$ of $\hat{A}$ where the diagonal blocks follow the zero pattern  
$$
A^{(1)}_{ii} = \left[\begin{smallmatrix}
    0 & \star \\ \star & 0
\end{smallmatrix}\right]
$$
and the off-diagonal entries are selected to ensure that each $A^{(1)}_{ii}$ has distinct nonzero eigenvalues. Moreover, assign nonzero values to the remaining generic entries of $\hat{A}$ in its numerical realization $\bar{A}$. Let $\bar{B}$ be a numerical realization of $\hat{B}$ such that $\hat{b}_1 = [1\ \ 0]^\top$. Similarly, define $\bar{C}$ as a numerical realization of $\hat{C}$, where  $\hat{c}_i = [1\ \ 0]$ for $i \in \{1, \dots, 2^h - 1\}$.  

In view of the block triangular structure of $\bar A$ and the conditions on its diagonal blocks, it follows that (i) $\bar A$ is diagonalizable, and (ii) $\bar\lambda=0$ is the only uncontrollable eigenvalue of the system. Therefore, we are in position to apply Corollary \ref{cor:PBH} in Appendix \ref{sec:out_PBH}. We note that

\begin{align*}
\rank(\bar{C}[\bar\lambda I-\bar{A} \ \ \bar{B}])=\rank(\bar{C}[\bar{B} \ \ \bar{A})].
\end{align*}
Moreover, $\bar{C}[\bar{B} \ \ \bar{A}]=[T \ \ 0]$ where $T\in\mathbb{R}^{(2^{h+1} -1)\times(2^{h+1} -1)}$ is an upper triangular matrix with nonzero diagonal entries. Thus, $\bar{C}[\bar\lambda I-\bar{A} \ \ \bar{B}]$ is full row-rank and the system described by $(\bar A, \bar B,\bar C)$ is output controllable. Since there exist a numerical realization of $\hat{A},\hat{B},\hat{C}$ that makes $\hat{\Sigma}$ output controllable, $\hat{\Sigma}$ is structurally output controllable. 
\end{proof}

\subsubsection{Case study II. Single bifurcation network}
 Consider the structured system described by the single bifurcation network ${\rm {G}}(\Sigma)$ with height $h$ of Fig.~\ref{fig:bifurcation}(a) and assume that $h$ is even.
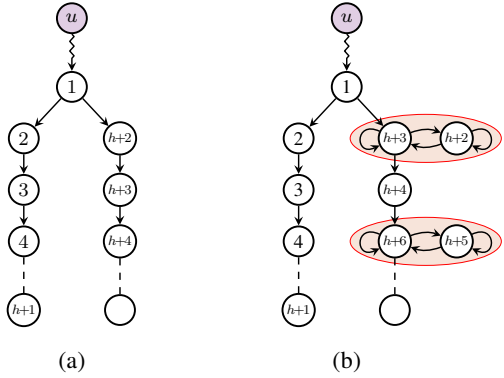
\begin{figure}[!h]
\begin{center}
\resizebox{0.375\textwidth}{!}{
\begin{tikzpicture}[node_style/.style={draw, circle,thick, fill=white, minimum size=0.435cm,font=\footnotesize}]
\begin{scope}   
\node[node_style,inner sep=0.01cm] (1) at (0,0) {$1$};
\node[node_style,inner sep=0.01cm] (2) at (-0.7,-0.75) {$2$};
\node[node_style,inner sep=0.01cm] (3) at (-0.7,-1.5) {$3$};
\node[node_style,inner sep=0.01cm] (4) at (-0.7,-2.25) {$4$};
\node[node_style,scale=0.7,inner sep=0.05cm] (h) at (-0.7,-3.25) {$h\!\!+\!\!1$};
\node[node_style,scale=0.7,inner sep=0.05cm] (h2) at (0.7,-0.75) {$h\!\!+\!\!2$};
\node[node_style,scale=0.7,inner sep=0.05cm] (h3) at (0.7,-1.5) {$h\!\!+\!\!3$};
\node[node_style,scale=0.7,inner sep=0.05cm] (h4) at (0.7,-2.25) {$h\!\!+\!\!4$};
\node[node_style,inner sep=0.01cm] (n) at (0.7,-3.25) {\ };

\node[node_style,inner sep=0.01cm,fill=inputcolor] (u) at (0,1) {$u$};


\draw[-stealth,semithick,line join=round,
semithick,
decorate, decoration={
    zigzag,
    segment length=4,
    amplitude=.9,post=lineto,
    post length=5pt
}] (u) -- (1);

\draw[-stealth,semithick] (1) -- (2);
\draw[-stealth,semithick] (2) -- (3);
\draw[-stealth,semithick] (3) -- (4);
\draw[dashed,semithick] (4) -- (h);
\draw[-stealth,semithick] (1) -- (h2);

\draw[-stealth,semithick] (h2) -- (h3);
\draw[-stealth,semithick] (h3) -- (h4);
\draw[dashed,semithick] (h4) -- (n);

\node at (0,-4) {(a)};
\end{scope}
\begin{scope}[xshift=4cm]
\node[node_style,inner sep=0.01cm] (1) at (0,0) {1};
\node[node_style,inner sep=0.01cm] (2) at (-0.7,-0.75) {2};
\node[node_style,inner sep=0.01cm] (3) at (-0.7,-1.5) {3};
\node[node_style,inner sep=0.01cm] (4) at (-0.7,-2.25) {4};
\node[node_style,scale=0.7,inner sep=0.05cm] (h) at (-0.7,-3.25) {$h\!\!+\!\!1$};
\draw[draw=red,line width=0.01,fill=BrickRed!10] (1.15,-0.75) ellipse (1.1 and 0.35);
\node[node_style,scale=0.7,inner sep=0.05cm] (h2) at (0.7,-0.75) {$h\!\!+\!\!3$};
\node[node_style,scale=0.7,inner sep=0.05cm] (h2b) at (1.6,-0.75) {$h\!\!+\!\!2$};
\node[node_style,scale=0.7,inner sep=0.05cm] (h3) at (0.7,-1.5) {$h\!\!+\!\!4$};
\draw[draw=red,line width=0.01,fill=BrickRed!10] (1.15,-2.25) ellipse (1.1 and 0.35);
\node[node_style,scale=0.7,inner sep=0.05cm] (h4) at (0.7,-2.25) {$h\!\!+\!\!6$};
\node[node_style,scale=0.7,inner sep=0.05cm] (h4b) at (1.6,-2.25) {$h\!\!+\!\!5$};
\node[node_style,inner sep=0.01cm] (n) at (0.7,-3.25) {\ };

\node[node_style,inner sep=0.01cm,fill=inputcolor] (u) at (0,1) {$u$};


\draw[-stealth,semithick,line join=round,
semithick,
decorate, decoration={
    zigzag,
    segment length=4,
    amplitude=.9,post=lineto,
    post length=5pt
}] (u) -- (1);

\draw[-stealth,semithick] (1) -- (2);
\draw[-stealth,semithick] (2) -- (3);
\draw[-stealth,semithick] (3) -- (4);
\draw[dashed,semithick] (4) -- (h);
\draw[-stealth,semithick] (1) -- (h2);
\draw[-stealth,semithick] (h2) -- (h3);
\draw[-stealth,semithick] (h3) -- (h4);
\draw[dashed,semithick] (h4) -- (n);
\draw[-stealth,semithick] (h2) to[bend left=20] (h2b);
\draw[-stealth,semithick] (h2b) to[bend left=20] (h2);
\draw[-stealth,semithick] (h4) to[bend left=20] (h4b);
\draw[-stealth,semithick] (h4b) to[bend left=20] (h4);

\draw[-stealth,semithick] (h2) to[in=205,out=155,loop,looseness=5] (h2);

\draw[-stealth,semithick] (h2b) to[in=-25,out=25,loop,looseness=5] (h2b);

\draw[-stealth,semithick] (h4) to[in=205,out=155,loop,looseness=5] (h4);

\draw[-stealth,semithick] (h4b) to[in=-25,out=25,loop,looseness=5] (h4b);

\node at (0,-4) {(b)};
\end{scope}
\end{tikzpicture}
}
\end{center}
\vspace{-1cm}
\caption{(a) Single bifurcation network of height $h$.  (b) Extended single bifurcation network. The red ellipses denote the extended nodes, which include the even nodes of the right branch of the original network ($h$ is assumed to be even). The state nodes $x_i\in\mathcal{V}$ are denoted simply by $i$ to reduce cluttering.}
\vspace{-0.25cm}
\label{fig:bifurcation}
\end{figure}
This system is not structurally controllable by Theorem \ref{thm:gen_dim}, since the maximum-cardinality set of nodes $\Zc$ spanned by vertex-disjoint stems and cycles is a stem with $h+1$ nodes in $\Vc$. To make the network structurally controllable, the minimum number of first-order heterogeneous subsystems that have to be added is $h$.
Consider the extended bifurcation network of Fig.~\ref{fig:bifurcation}(b) which is obtained by replacing each first-order subsystem associated with the even nodes of the right branch of ${\rm{G}}(\Sigma)$ with a second-order subsystem. The extended network system $\hat{\Sigma}$ contains $\hat{S}=h/2$~higher-order~heterogeneous~subsystems. 

With reference to the labeling of nodes of Fig.~\ref{fig:bifurcation}(b), the state matrix $\hat{A}$ of the extended network has the block lower triangular form
$$
A=\left[\begin{array}{c|c}
A^{(11)} & 0\\
\hline 
A^{(21)} & A^{(22)}
 \end{array}\right].
$$
Here, $A^{(11)}\in\mathbb{R}^{(h+1)\times (h+1)}$ is the transpose of a nilpotent Jordan block with nonzero entries set to generic values, $A^{(21)}$$\in\mathbb{R}^{3h/2\times (h+1)}$ has only one generic entry in position $(2,1)$, and $A^{(22)}\in\mathbb{R}^{{{3h/2}\times {3h/2}}}$ is a block $3\times 3$ triangular matrix of the form\vspace{-2px}
$$
A^{(22)}=\left[\begin{array}{ccccc}A^{(22)}_{11} & 0 & \cdots & 0\\ 
A^{(22)}_{21} & A^{(22)}_{22} & \ddots &  \vdots\\
\vdots & \ddots& \ddots & 0\\
A^{(22)}_{h/2} & \cdots& \cdots & A^{(22)}_{h/2,h/2}\!
 \end{array}\right],
$$\vspace{-2px}
where\vspace{-2px}
\begin{align}\label{eqn:bif_off_diag}
A^{(22)}_{ij}&=\begin{cases}\left[\begin{smallmatrix} \star & \star  & 0 \\ \star & \star & 0 \\   0 & \star & 0\end{smallmatrix}\right] & \text{ if } i=j,\\[0.2cm]
\left[\begin{smallmatrix}0 & 0  & 0 \\ 0 & 0 & \star \\ 0 & 0 & 0\end{smallmatrix}\right]  & \text{ if } i=j+1,\\
 0 & \text{ otherwise}.
\end{cases}
\end{align}
Moreover, the input matrix $\hat B$ is such that $\hat b_1$ is a generic real number and $\hat b_i=0$ for $i\ne 1$. The output matrix $\hat{C}$ is such that $\hat{c}_i \in \mathbb{R}^{1\times 2}$ { have generic entries for $i \in \{h+2, h+4, \dots\}$  and $\hat{c}_i=1$ otherwise.}

 The following proposition shows that the extended single bifurcation network is structurally output controllable, which implies that $\Delta= h/2>0$. This confirms the advantage of introducing higher-order heterogeneous subsystems for this class of Y-networks.\footnote{It is worth noting that this advantage is the maximum achievable since the number of modified subsystems equals the lower bound~of~Theorem~\ref{prop:bounds}.}
\begin{proposition}
    The extended single bifurcation network described above (see Fig. \ref{fig:bifurcation}(b)) is structurally output controllable.
\end{proposition}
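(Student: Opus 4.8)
The plan is to follow the template of the preceding proof for the binary tree: exhibit an explicit numerical realization $(\bar A,\bar B,\bar C)$ of the structured triple $(\hat A,\hat B,\hat C)$, show that $\bar\lambda=0$ is the only uncontrollable eigenvalue, and then invoke a PBH-type output-controllability test from Appendix~\ref{sec:out_PBH} to reduce the claim to a finite rank computation. Concretely, I would keep $A^{(11)}$ as the transpose of the nilpotent Jordan block with generic nonzero entries, set $\hat b_1=1$ and $\hat b_i=0$ for $i\neq 1$, and choose $\hat c_i=[\,1\ \ 0\,]$ on the extended (even) nodes and $\hat c_i=1$ elsewhere. For $A^{(22)}$ I would select the diagonal blocks $A^{(22)}_{ii}$ so that their top-left $2\times 2$ corners have distinct nonzero eigenvalues, pairwise distinct across the $h/2$ blocks (by an off-diagonal zero pattern analogous to the tree case), while the structural zero in the third coordinate of each block contributes an eigenvalue $0$. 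By the block-triangular form, $\spec(\bar A)=\spec(A^{(11)})\cup\spec(A^{(22)})$, so the nonzero spectrum consists of $h$ simple eigenvalues and $0$ has large algebraic multiplicity.

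The first substantive step is to verify that every nonzero eigenvalue is controllable and that $0$ is the unique uncontrollable one. Since the input enters only at node $1$ and the coupling block $A^{(21)}$ has a generic entry in position $(2,1)$, excitation propagates from the root/left chain into the right branch, and from block to block of $A^{(22)}$ via its subdiagonal blocks \eqref{eqn:bif_off_diag}. Applying the standard PBH rank test $\rank[\mu I-\bar A\ \ \bar B]=\hat n$ at each nonzero $\mu$ reduces, by the triangular structure and the simplicity of $\mu$, to checking that the associated (left) eigendirection is not annihilated by $\bar B$, which holds generically for the chosen entries. At $\lambda=0$ the test fails, consistent with the fact that the original network is not structurally controllable.

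The main obstacle, and the genuine departure from the tree case, is that $A^{(11)}$ is here a \emph{bona fide} nilpotent Jordan block, so $\bar A$ is \emph{not} diagonalizable and the eigenvalue $0$ carries nontrivial Jordan structure. Consequently the diagonalizable form of Corollary~\ref{cor:PBH} does not apply verbatim, and I would instead invoke the general PBH-type output-controllability condition of Appendix~\ref{sec:out_PBH} (which does not presuppose diagonalizability); this again localizes the verification to $\bar\lambda=0$. There one has $\bar C[\bar\lambda I-\bar A\ \ \bar B]=\bar C[-\bar A\ \ \bar B]$, whose rank equals that of $\bar C[\bar B\ \ \bar A]$.

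The concluding step is to show that, with the chosen output vectors $\hat c_i$ and the edge structure \eqref{eqn:bif_off_diag}, the matrix $\bar C[\bar B\ \ \bar A]$ takes the form $[\,T\ \ 0\,]$ with $T\in\Rb^{(2h+1)\times(2h+1)}$ upper triangular and nonzero on the diagonal, hence of full row rank $n=2h+1$, yielding structural output controllability of $\hat\Sigma$. I expect the bookkeeping for $T$ to be the most delicate part: the $3\times 3$ blocks of $A^{(22)}$ and the nilpotent left chain must be ordered so that each of the $2h+1$ original output coordinates is hit exactly once by a pivot, after which genericity of the $\star$ entries guarantees the nonvanishing diagonal.
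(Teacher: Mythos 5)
Your overall route is the paper's: an explicit numerical realization, identification of $\bar\lambda=0$ as the unique uncontrollable eigenvalue, reduction via the appendix to a single rank condition at $\bar\lambda=0$, and a triangular-submatrix argument for $\bar{C}[\bar{B}\ \ \bar{A}]$. The gap is in how you handle the non-diagonalizability of $\bar{A}$. You correctly flag that $A^{(11)}$ is a genuine nilpotent Jordan block, but you then appeal to ``the general PBH-type output-controllability condition of Appendix~\ref{sec:out_PBH}, which does not presuppose diagonalizability.'' No such unconditional test exists there: both Theorem~\ref{thm:PBH} and Corollary~\ref{cor:PBH} require the hypothesis $\mathcal{N}\cap\ker B^{\top}\subseteq \mathrm{Im}\, C^{\top}$, where $\mathcal{N}$ is spanned by the ordinary eigenvectors of $A^{\top}$ belonging to non-trivial Jordan chains; diagonalizability is merely the trivial case $\mathcal{N}=\{0\}$. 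Since $\bar{A}$ is not diagonalizable here, this hypothesis must be verified explicitly, and your proposal omits that verification. The paper supplies it by showing (a) that the chosen realization of $A^{(22)}$ is diagonalizable (its zero eigenvalue has algebraic and geometric multiplicity both equal to $h/2$), so the only non-trivial Jordan chain of $\bar{A}^{\top}$ is the one coming from the nilpotent block $A^{(11)}$, and (b) that the ordinary eigenvector of that chain has the form $w=[\alpha\ 0\ \cdots\ 0]^{\top}$ with $\alpha\ne 0$, whence $\mathrm{span}\{w\}\cap\ker\bar{B}^{\top}=\{0\}$ because the first entry of $\bar{B}$ is nonzero. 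Only then does Corollary~\ref{cor:PBH} license restricting the rank test to $\bar\lambda=0$. Without (a) in particular, a second non-trivial Jordan chain at $0$ hidden inside $A^{(22)}$ could place a vector of $\mathcal{N}$ in $\ker\bar{B}^{\top}$ but outside $\mathrm{Im}\,\bar{C}^{\top}$, and the localization you rely on would be unjustified.

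A secondary point: unlike the tree case, $\bar{C}[\bar{B}\ \ \bar{A}]$ is here a $(2h+1)\times(h+2+3h/2)$ matrix and is not of the form $[T\ \ 0]$; the paper instead deletes the columns in positions $h+2+3k$, $k\in\{0,\dots,h/2-1\}$, to extract a nonsingular triangular square submatrix (and takes $\hat{c}_i=[0\ \ 1]$ on the extended nodes rather than your $[1\ \ 0]$). You flag this bookkeeping as delicate, and it is fixable, but your concluding step does not go through verbatim as stated.
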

\begin{proof}
Consider a numerical realization $\bar{A}$ of $\hat{A}$ where the diagonal blocks of $A^{(22)}$ follow the zero pattern  
$$
A^{(22)}_{ii} =
\left[\begin{smallmatrix}
    0 & \star  & 0 \\ \star & 0 & 0 \\   0 & \star & 0
\end{smallmatrix}\right]
$$
and the entries in positions $(1,2)$ and $(2,1)$ are selected to ensure that each $A^{(22)}_{ii}$ has two distinct nonzero eigenvalues. Moreover, assign nonzero values to the remaining generic entries of $\hat{A}$ in its numerical realization $\bar{A}$. Let $\bar{B}$ be a numerical realization of $\hat{B}$ such that $\hat{b}_1 = 1$. Similarly, define $\bar{C}$ as a numerical realization of $\hat{C}$, where  $\hat{c}_i = [0\ \ 1]$ for $i \in \{h+2, h+4, \dots\}$. 

In the above numerical realization $\bar A$, the block $A^{(11)}$ is not diagonalizable since it has only one eigenvalue $\bar\lambda=0$ with algebraic multiplicity $h+1$ and geometric multiplicity $1$. 
Moreover, the block $A^{(22)}$ has $h$ distinct nonzero eigenvalues and one eigenvalue $\bar\lambda=0$ with algebraic and  geometric multiplicity equal to  $h/2$.\footnote{The fact that the geometric multiplicity equals $h/2$ follows from $\mathrm{rank}\, A^{(22)}=h$, which yields $\dim \ker A^{(22)} = 3h/2- \mathrm{rank}\, A^{(22)} = h/2$.} Therefore, for such choice of weights $A^{(22)}$ is diagonalizable.

From the above observations, it can be shown that (i) $\bar A^{\top}$ has only one non-trivial Jordan chain associated with the eigenvalue $\bar\lambda=0$ and the ordinary eigenvectors of this chain have the form $w=[\alpha \ 0 \ \cdots \ 0]^{\top}$, $\alpha\ne 0$, and (ii) $\bar\lambda=0$ is the only uncontrollable eigenvalue of the system.

Since $\mathrm{span}\{w\}\cap \ker \bar B^{\top}=\{0\}$ and $(\bar A, \bar B)$ has only one uncontrollable eigenvalue $\bar{\lambda} =0$, we are in position to apply Corollary \ref{cor:PBH} in Appendix \ref{sec:out_PBH}. We note that
\begin{align*}
\mathrm{rank}(\bar{C}[\bar\lambda I-\bar{A} \ \ \bar{B}])&=\mathrm{rank}(\bar{C}[\bar{B} \ \ \bar{A}]).
\end{align*}
Moreover, by deleting the columns of $\bar{C}[\bar{B} \ \ \bar{A}]$ in position $h+2+3k$, $k\in\{0,1,2,\dots,h/2-1\}$, we obtain a square matrix $T\in\mathbb{R}^{(2h+1)\times(2h+1)}$ which is lower triangular with nonzero diagonal entries. The existence of a $(2h+1)\times(2h+1)$ nonsingular submatrix of $\bar{C}[\bar{B} \ \ \bar{A}]$ implies that the latter matrix has full row-rank. 

Thus, $\bar{C}[\bar\lambda I-\bar{A} \ \ \bar{B}]$ is full row-rank and the system described by $(\bar A, \bar B,\bar C)$ is output controllable. Since there exists a numerical choice of $\hat{A},\hat{B},\hat{C}$ that makes $\hat{\Sigma}$ output controllable, $\hat{\Sigma}$ is~structurally~output~controllable. 
\end{proof}

\subsection{Making general networks structurally controllable} \label{sec:gen_net}
Summarizing, so far we have that:
\begin{enumerate}
    \item X-networks can be modified to obtain a structurally controllable network (using the procedure detailed in Theorem \ref{thm:hoi}).
    \item Y-networks cannot be made neither structurally controllable nor structurally output controllable with the same approach, as the introduction of heterogeneous higher-order dynamics is needed. However,  we showed that it is still possible to obtain advantages in term of the number of subsystems to be modified for structural output controllability with respect to the introduction of heterogeneous first-order dynamics.
\end{enumerate}

 Input-accessible network systems that lack structural controllability and do not fall under type X or Y can be modified by introducing a mixture of homogeneous and heterogeneous higher-order subsystems to achieve structural (output) controllability, potentially obtaining a combination of the benefits outlined in Section \ref{sec:advantages}. A method to render networks structurally (output) controllable via higher-order dynamics can be obtained by first using the approach of Theorem \ref{thm:hoi}, and subsequently the same ideas we used for Y-networks on the already modified graph. A detailed exposition is provided next.

Let ${\Sigma}$ be an input-accessible network system which is not structurally controllable. $\Sigma$ satisfies the hypothesis of Corollary \ref{cor:ctrb_cases}, therefore every set of stem and cycles covering all state nodes in ${\rm G}(\Sigma)$ contains non-vertex disjoint paths.

We consider a (fixed) set of stems and cycles $\Pc$ covering all state nodes in $\textrm{G} (\Sigma)$. To design an extended system which is structurally controllable we perform the following steps.

\textbf{Step 1:} We apply the graph-theoretic method specified in the proof of Theorem \ref{thm:hoi}, considering the set of paths $\Pc$ fixed above: { we identify all state nodes corresponding, to intersections of type 1)-2)-3), {\em with the addition of the {state} nodes that correspond to intersections of stems that are originated by the same input node.}} After applying the method we obtain an extended system $\hat{\Sigma},$ and a set of stems and cycles $\hat{\Pc}$ covering all state node in ${\rm G}(\hat{\Sigma})$.

The design performed in this step guarantees that the cycles in $\hat{\Pc}$ are vertex-disjoint. However, if the system $\Sigma$ is not of type X, then $\hat{\Pc}$ contains stems originated from the same input node. Therefore, $\hat{\Pc}$ is not a set of vertex-disjoint stems and cycles and we cannot conclude that the system is structurally controllable.

\textbf{Step 2:} We consider $\hat{\Sigma}$ obtained in the previous step, and a set of vertex-disjoint stems $\hat{\Sc}\subseteq \hat{\Pc}$ covering the maximum number of { state} nodes in ${\rm G}(\hat{\Sigma})$. Let $\Zc$ be the set of state nodes belonging to {\em stems} in $\hat{\Pc}\setminus \hat{\Sc}$. Next, we proceed as follows:\\ (i) If a node in $\Zc$ is associated to a higher-order subsystem in $\hat{\Sigma}$,  we add {\em heterogeneous} internal dynamics to that subsystem (notice that after step 1 all subsystems in $\hat{\Sigma}$ are homogeneous). In the new graph ${\rm G}(\hat{\Sigma})$, we keep the same node labeling we had before this last modification;\\ (ii) If a node in $\Zc$ is associated to a first-order subsystem in $\hat{\Sigma}$, we modify the corresponding subsystem such that it has first-order {\em heterogeneous} dynamics, while leaving its label and connections to other nodes as they were. 
 
 After Step 2, all state nodes in $\hat{\Pc}\setminus \hat{\Sc}$ are covered by vertex-disjoint cycles, either self-loops, due to the added heterogeneous dynamics, or cycles already present in $\hat{\Pc}$. Since the remaining state nodes in $\hat{\Sc}$ are covered by vertex-disjoint stems, we conclude that, after Step 2, all state nodes are covered by vertex-disjoint stems and cycles.
Therefore, by Theorem \ref{thm:gen_dim} the system $\hat{\Sigma}$ designed through the outlined procedure is structurally controllable. {By Lemma \ref{lem:str_out_ctrb}, this in turn implies that $\hat{\Sigma}$ structurally output controllable}.

\section{Conclusion}\label{sec:conclusion}

This work explores the potential of introducing higher-order local dynamics in networks of identical, first-order systems to attain structural controllability properties. After establishing a connection between state controllability in networks of first-order systems and output controllability in networks of higher-order systems, we discuss how structural output controllability can be achieved by properly replacing a subset of nodes in the original network with higher-order systems. A known result in the literature is that structural controllability in input-accessible networks can be attained by allowing its nodes to have first-order heterogeneous internal dynamics. We show that introducing higher-order systems offers two key advantages with respect to first-order dynamics: A1) Structural output controllability can be ensured while preserving the homogeneity of the nodes, as they were in the original network; A2) Fewer subsystems need to be modified to achieve structural output controllability.

We characterize the class of X-networks, where both advantages can be achieved, and provide a method to replace only the controllability-preventing nodes with higher-order systems having homogeneous internal dynamics. This approach leads to potentially unbounded advantages in required modifications as network size grows.
We next define the class of Y-networks, where A1) cannot be achieved. We establish bounds on the number of higher-order heterogeneous subsystems that must be introduced to achieve structural output controllability. We also derive a PBH-like test, that allow us to prove that A2) is still achievable in certain scalable Y-network topologies like binary trees and bifurcation networks.
Our framework extends to general networks beyond X or Y classes, enabling output controllability with the introduction of a reduced number of homogeneous and heterogeneous subsystems.

Several open questions remain for further investigation, in particular the development of a systematic approach to achieve structural output controllability in Y-networks while minimizing the number of subsystem modifications. 
Moreover, the analysis performed in this paper can be combined with the one in \cite{MP-GB-FT:2024} to improve both structural and energy-related controllability by introducing higher-order local dynamics.

\appendix

\subsection{PBH tests for output controllability}\label{sec:out_PBH}

To state and prove the main results of this section, we introduce some additional notation.
Given a matrix $A\in\mathbb{R}^{n\times m}$, we let $\im(A)$ be image of $A$. The symbols $\oplus$ and $\otimes$ stand for the direct sum of subspaces and the Kronecker product of matrices, respectively. We indicate with $\mathbf{1}_n$ the $n$-dimensional vector of all ones and with $\mathcal{X}^\perp$ the orthogonal complement of a subspace $\mathcal{X}$. Given matrices $A_i\in\mathbb{R}^{n_i\times m_i}$, $i=1,\dots,N$, $\mathrm{diag}(A_1,\dots,A_N)$ denotes the block matrix with diagonal blocks $A_1,\dots,A_N$. For a square matrix $A\in\mathbb{R}^{n\times n}$, we let $\{\lambda_{i}\}_{i=1}^{r}$ be the eigenvalues of $A$ and $\nu_{i}$ be the algebraic multiplicity of $\lambda_{i}$ in the minimal polynomial of $A$, i.e., the maximum size of Jordan blocks corresponding to $\lambda_{i}$. Moreover, we let $\{w_{i}\}_{i=1}^{\ell}$ be the (ordinary) eigenvectors of $A^{\top}$ corresponding to non-trivial Jordan chains, i.e., Jordan chains of length greater or equal than $2$, and we define $\mathcal{N}:=\mathrm{span}\{w_{1},\dots,w_{\ell}\}$. 
Note that for any $x\in\mathbb{R}^{n}$ which is a generalized eigenvector of $A^{\top}$, i.e., satisfies $(\lambda_{i}I-A^{\top})^{q+1}x=0$ and $(\lambda_{i}I-A^{\top})^{q}x\ne 0$ for some $i\in\{1,\dots,r\}$ and $q\in\{1,\dots,\nu_{i}-1\}$, we have $(\lambda_{i}I-A^{\top})^{q}x\in\mathcal{N}$.

\begin{theorem}\label{thm:PBH}
Consider a linear system as in \eqref{eq:netsys} with $p< n$ and $C$ full row-rank. Assume that $\mathcal{N}\cap \ker B^{\top}\subseteq \mathrm{Im}\, C^{\top}$. Then, the following statements are equivalent
\begin{enumerate}[(i)]
\item the system is output controllable.
\item $\mathrm{Im}\, C^{\top} \cap\left( \bigoplus_{i=1}^{r}  \ker (\lambda_{i}I-A^{\top}) \cap \ker B^{\top}\right)=\{0\}$. 
\item The matrix $$\begin{bmatrix} \mathrm{diag}([\lambda_{1}I-A\ \ B],\dots,[\lambda_{r}I-A\ \ B]) &  \mathbf{1}_{r}\!\otimes\! K_{C} \end{bmatrix}$$
 has full row-rank, where $K_{C}\in\mathbb{R}^{n\times(n-p)}$ is such that $\mathrm{Im}\, K_{C} =\ker C$.
\end{enumerate}
\end{theorem}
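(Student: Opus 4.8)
\emph{Plan.} The strategy is to translate output controllability into a subspace-intersection condition and then to match that condition against (ii) and (iii) through the spectral structure of $A^{\top}$. First I would recall that the system is output controllable if and only if $\rank(C\Rc)=p$, i.e.\ $C\Cc=\mathbb{R}^{p}$, where $\Cc:=\im\Rc$ denotes the controllable subspace. Since $\Cc$ is $A$-invariant and contains $\im B$, its orthogonal complement $\Cc^{\perp}$ is $A^{\top}$-invariant and satisfies $\Cc^{\perp}\subseteq\ker B^{\top}$. Using that $C$ has full row rank (so that $C^{\top}$ is injective), a short duality computation on $(C\Cc)^{\perp}=\{z:C^{\top}z\in\Cc^{\perp}\}$ yields
\begin{equation*}
\text{(i)}\iff \im C^{\top}\cap \Cc^{\perp}=\{0\}.
\end{equation*}
I would then work over $\mathbb{C}$ and use $\Cc^{\perp}=\bigoplus_{i=1}^{r}(\Cc^{\perp}\cap G_{i})$, where $G_{i}=\ker(\lambda_{i}I-A^{\top})^{n}$ are the generalized eigenspaces of $A^{\top}$; this decomposition is legitimate because each spectral projector is a polynomial in $A^{\top}$ and hence preserves the invariant subspace $\Cc^{\perp}$, and the conditions involved are invariant under complexification. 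Writing $\Wc:=\bigoplus_{i=1}^{r}\ker(\lambda_{i}I-A^{\top})\cap\ker B^{\top}$, the implication (i)$\Rightarrow$(ii) is then immediate: any $w$ with $A^{\top}w=\lambda_{i}w$ and $B^{\top}w=0$ satisfies $w^{\top}A^{k}B=\lambda_{i}^{k}w^{\top}B=0$, so $\Wc\subseteq\Cc^{\perp}$, and the displayed equivalence forces $\im C^{\top}\cap\Wc=\{0\}$.

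The core of the argument is (ii)$\Rightarrow$(i), which I would prove by contraposition; this is where the hypothesis $\mathcal{N}\cap\ker B^{\top}\subseteq\im C^{\top}$ enters, and I expect it to be the main obstacle. Assume $\im C^{\top}\cap\Cc^{\perp}\neq\{0\}$ and pick $0\neq v$ in it, with $v=\sum_{i}v_{i}$ and $v_{i}\in\Cc^{\perp}\cap G_{i}$. If every nonzero $v_{i}$ is an ordinary eigenvector, then $v\in\Wc$, and $v\in\im C^{\top}$ already contradicts (ii). Otherwise choose an index $i_{0}$ for which $v_{i_{0}}$ has height $h\ge 2$, i.e.\ $(\lambda_{i_{0}}I-A^{\top})^{h}v_{i_{0}}=0$ and $(\lambda_{i_{0}}I-A^{\top})^{h-1}v_{i_{0}}\neq0$, and set $e:=(\lambda_{i_{0}}I-A^{\top})^{h-1}v_{i_{0}}$. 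Then $e$ is a nonzero ordinary eigenvector; by the note preceding the theorem (applied with $q=h-1\in\{1,\dots,\nu_{i_{0}}-1\}$) we have $e\in\mathcal{N}$. Moreover, since $\Cc^{\perp}$ is $A^{\top}$-invariant and $(\lambda_{i_{0}}I-A^{\top})^{h-1}$ is a polynomial in $A^{\top}$, it follows that $e\in\Cc^{\perp}\subseteq\ker B^{\top}$. Hence $e\in\mathcal{N}\cap\ker B^{\top}\subseteq\im C^{\top}$ by hypothesis, while $e\in\ker(\lambda_{i_{0}}I-A^{\top})\cap\ker B^{\top}\subseteq\Wc$; thus $0\neq e\in\im C^{\top}\cap\Wc$, contradicting (ii). The delicate point is precisely the extraction of an \emph{ordinary} eigenvector that remains inside $\im C^{\top}$: reducing along a Jordan chain lands in $\mathcal{N}$, but $\im C^{\top}$ is not $A^{\top}$-invariant, so staying inside it is guaranteed only by the hypothesis on $\mathcal{N}$.

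Finally, for (ii)$\iff$(iii) I would compute the left null space of the matrix $M$ in (iii). A left annihilator is a tuple $(\xi_{1},\dots,\xi_{r})$ satisfying $\xi_{i}^{\top}[\lambda_{i}I-A\ \ B]=0$, equivalently $\xi_{i}\in\ker(\lambda_{i}I-A^{\top})\cap\ker B^{\top}$, together with $\big(\sum_{i}\xi_{i}\big)^{\top}K_{C}=0$, equivalently $\sum_{i}\xi_{i}\in(\ker C)^{\perp}=\im C^{\top}$. Because eigenvectors belonging to distinct eigenvalues are linearly independent, $\sum_{i}\xi_{i}=0$ forces every $\xi_{i}=0$; hence the map $(\xi_{i})\mapsto\sum_{i}\xi_{i}$ is a bijection between nonzero left annihilators of $M$ and nonzero vectors of $\Wc\cap\im C^{\top}$. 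Consequently $M$ has full row rank if and only if $\Wc\cap\im C^{\top}=\{0\}$, which is exactly (ii), completing the cycle of equivalences.
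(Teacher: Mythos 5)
Your proof is correct, but it reaches the equivalence (i) $\Leftrightarrow$ (ii) by a genuinely different, self-contained route. The paper starts from an external result (Theorem 3.1 of the cited work of Danhane--Lor\'eti--Jungers), which characterizes output controllability as $\mathrm{Im}\, C^{\top}\cap\bigoplus_{i}E_{i}=\{0\}$ with $E_{i}=\ker(\lambda_{i}I-A^{\top})^{\nu_{i}}\cap\bigcap_{k}\ker\left(B^{\top}(\lambda_{i}I-A^{\top})^{k}\right)$, and then shows that under the hypothesis $\mathcal{N}\cap\ker B^{\top}\subseteq\mathrm{Im}\, C^{\top}$ the generalized eigenspaces $E_{i}$ can be replaced by the ordinary ones appearing in (ii). You instead prove from first principles that output controllability is equivalent to $\mathrm{Im}\, C^{\top}\cap\mathcal{C}^{\perp}=\{0\}$ for the controllable subspace $\mathcal{C}$, decompose $\mathcal{C}^{\perp}$ along the generalized eigenspaces of $A^{\top}$ (legitimate since the spectral projectors are polynomials in $A^{\top}$ and $\mathcal{C}^{\perp}$ is $A^{\top}$-invariant and contained in $\ker B^{\top}$), and extract an ordinary eigenvector by descending a Jordan chain, using the hypothesis on $\mathcal{N}$ exactly where $\mathrm{Im}\, C^{\top}$ fails to be $A^{\top}$-invariant. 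What your approach buys is independence from the cited theorem and a cleaner case analysis: you take the witness $v$ from the intersection $\mathrm{Im}\, C^{\top}\cap\mathcal{C}^{\perp}$ itself and treat the height-one case separately, whereas the paper's converse argument picks an arbitrary $x\in E_{i}\setminus\{0\}$ and its $q=0$ branch implicitly relies on membership in $\mathrm{Im}\, C^{\top}$ that is only guaranteed for elements of the intersection; your version avoids this looseness. What the paper's route buys is brevity, since the heavy lifting is delegated to the cited result. Your (ii) $\Leftrightarrow$ (iii) argument via left annihilators of the block matrix, using linear independence of eigenvectors for distinct eigenvalues, is the same computation as the paper's chain of kernel identities, merely phrased dually.
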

\begin{proof}
[(i) $\Leftrightarrow$ (ii)]. From \cite[Theorem 3.1]{BD-JL-MJ:23}, the system is output controllable if and only if
\vspace{-2px}
\begin{equation}\label{eq:gen-cond}
\mathrm{Im}\, C^{\top} \cap \bigoplus_{i=1}^{r} E_{i}=\{0\},
\end{equation} 
\vspace{-2px}
where $$E_{i}:= \ker(\lambda_{i}I-A^{\top})^{\nu_{i}}\cap \left(\bigcap_{k=0}^{\nu_{i}-1}\ker\left(B^{\top}(\lambda_{i}I-A^{\top})^k\right)\right).$$
We will show that (ii) is equivalent to \eqref{eq:gen-cond} when $\mathcal{N}\cap \ker B^{\top}\subseteq \mathrm{Im}\, C^{\top}$. More precisely, we will prove 
\begin{gather*}
    \mathrm{Im}\, C^{\top} \cap \bigoplus_{i=1}^{r} E_{i}\ne \{0\}\\
    \Updownarrow\\[-0.25cm]
    \mathrm{Im}\, C^{\top} \cap\left( \bigoplus_{i=1}^{r}  \ker (\lambda_{i}I-A^{\top}) \cap \ker B^{\top}\right)\neq\{0\},
\end{gather*}
assuming that $\mathcal{N}\cap \ker B^{\top}\subseteq \mathrm{Im}\, C^{\top}$.

First, note that, for all $i\in\{1,\dots,r\}$
$$
\ker (\lambda_{i}I-A^{\top}) \cap \ker B^{\top}\subseteq E_{i}.
$$
Hence, $\mathrm{Im}\, C^{\top} \cap\left( \bigoplus_{i=1}^{r} \ker B^{\top} \cap \ker (\lambda_{i}I-A^{\top})\right)\neq\{0\}$ implies $\mathrm{Im}\, C^{\top} \cap \bigoplus_{i=1}^{r} E_{i}\neq\{0\}$.

Conversely, suppose that  $\mathrm{Im}\, C^{\top} \cap \bigoplus_{i=1}^{r} E_{i}\neq\{0\}$. Then there exists $i\in\{1,\dots,r\}$ such that $E_{i}\ne \{0\}$. Note that for all $x\in E_{i}\setminus \{0\}$ there exists $q\in\{0,\dots,\nu_{i}-1\}$ such that $(\lambda_{i}I-A^{\top})^{q}x\ne 0$ and $(\lambda_{i}I-A^{\top})^{q}x\in \ker (\lambda_{i}I-A^{\top}) \cap \ker B^{\top}$. In particular, if $q\ne 0$ then $x$ is a generalized eigenvector of $A^{\top}$ and, by definition of $\mathcal{N}$, it holds $(\lambda_{i}I-A^{\top})^{q}x\in \mathcal{N} \cap \ker B^{\top}\subseteq \ker (\lambda_{i}I-A^{\top}) \cap \ker B^{\top}$.
From the assumption $\mathcal{N}\cap \ker B^{\top}\subseteq \mathrm{Im}\, C^{\top}$, we conclude that 
$$ 
(\lambda_{i}I-A^{\top})^{q}x\in \mathrm{Im}\, C^{\top} \cap\left(\ker (\lambda_{i}I-A^{\top}) \cap \ker B^{\top}\right),
$$
which implies \vspace{-4px}
$$
 \mathrm{Im}\, C^{\top}\cap\left( \bigoplus_{i=1}^{r}  \ker (\lambda_{i}I-A^{\top}) \cap \ker B^{\top}\right)\ne \{0\}.
$$
[(ii) $\Leftrightarrow$ (iii)]. The following equivalences hold
\begin{align*}
&\mathrm{Im}\, C^{\top} \cap\left( \bigoplus_{i=1}^{r} \ker B^{\top} \cap \ker (\lambda_{i}I-A^{\top})\right)=\{0\}\\
\Leftrightarrow\ & \ker K_{C}^{\top} \cap\bigoplus_{i=1}^{r} \ker \begin{bmatrix}\lambda_{i}I-A^{\top} \\ B^{\top} \end{bmatrix}=\{0\}\\
\Leftrightarrow\ & \ker (\mathbf{1}_{r}^{\top}\!\otimes\! K_{C}^{\top}) \cap \ker\mathrm{diag}\!\left(\!\begin{bmatrix}\lambda_{i}I-A^{\top}\\ B^{\top}\end{bmatrix}_{i=1,\dots,r}\!\right)\!=\!\{0\}\\
\Leftrightarrow\ & \ker\begin{bmatrix}\mathrm{diag}\!\left(\!\begin{bmatrix}\lambda_{i}I-A^{\top}\\ B^{\top}\end{bmatrix}_{i=1,\dots,r}\!\right)\\ \mathbf{1}_{r}^{\top}\!\otimes\! K_{C}^{\top}\end{bmatrix}\!=\!\{0\}\\
\Leftrightarrow\ & \mathrm{rank}\begin{bmatrix} \mathrm{diag}([\lambda_{i}I-A\ \ B]_{i=1,\dots,r}) &  \mathbf{1}_{r}\!\otimes\! K_{C} \end{bmatrix}=rn,
\end{align*}
where in the first equivalence we used the identity $\mathrm{Im}\, C^{\top} = [\ker C]^{\perp} = [\mathrm{Im}\, K_{C}]^{\perp} = \ker K_{C}^{\top}$ and in the second equivalence the fact that the subspaces $\ker \left[\begin{smallmatrix}\lambda_{i}I-A^{\top} \\ B^{\top} \end{smallmatrix}\right]$, $i\in\{1,\dots,r\}$, are mutually disjoint.
\end{proof}

Some comments on Theorem \ref{thm:PBH} are in order. First, when $A$ is diagonalizable then $\mathcal{N}=\{0\}$ and the assumption $\mathcal{N}\cap \ker B^{\top}\subseteq \mathrm{Im}\, C^{\top}$ is always satisfied. Second, condition (iii) cannot in general be replaced by 
\begin{align}\label{eq:PBH-output}
\mathrm{rank}\, C [\lambda_{i}I-A\ \ B]=p, \ \ i\in\{1,\dots,r\}.
\end{align}
As a counterexample, consider the system described by
$
A=\left[\begin{smallmatrix}
    1 & 0 \\ 0 & 0
\end{smallmatrix}\right]$, $B=\left[\begin{smallmatrix}0  \\ 0 \end{smallmatrix}\right]$, $C=\left[\begin{smallmatrix}1  & 1 \end{smallmatrix}\right]$. 
In this case $A$ is diagonalizable with eigenvalues $\lambda_{1}=0$ and $\lambda_{1}=1$ and the system is clearly not output controllable (the input matrix is zero). However, $\mathrm{rank}\, C [\lambda_{1}I-A\ \ B]=\mathrm{rank}\, C [\lambda_{2}I-A\ \ B]=1$.

The next  corollary of Theorem \ref{thm:PBH} provides a condition under which output controllability~is~equivalent~to~\eqref{eq:PBH-output}.
\begin{corollary}\label{cor:PBH}
Consider a linear system as in \eqref{eq:netsys} with $p< n$ and $C$ full row-rank. 
Assume that $\mathcal{N}\cap \ker B^{\top}\subseteq \mathrm{Im}\, C^{\top}$ and the system has only one uncontrollable eigenvalue $\bar{\lambda}\in \{\lambda_{1},\dots,\lambda_{r}\}$. Then, the system is output controllable iff
\begin{align}\label{eq:PBH-output2}
\mathrm{rank}\, C [\bar{\lambda}I-A\ \ B]=p.
\end{align}
\end{corollary}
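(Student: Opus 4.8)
The plan is to obtain Corollary \ref{cor:PBH} as a direct specialization of condition (ii) in Theorem \ref{thm:PBH}. The first step is to recall the classical PBH controllability test: an eigenvalue $\lambda_i$ of $A$ is uncontrollable precisely when $\mathrm{rank}[\lambda_i I - A\ \ B] < n$, equivalently when there is a nonzero left eigenvector annihilated by $B$, that is, when $\ker(\lambda_i I - A^\top)\cap \ker B^\top \neq \{0\}$. Since the hypotheses of the corollary stipulate that $\bar\lambda$ is the \emph{only} uncontrollable eigenvalue, for every $\lambda_i \neq \bar\lambda$ the summand $\ker(\lambda_i I - A^\top)\cap \ker B^\top$ is trivial.

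Consequently, the direct sum appearing in condition (ii) of Theorem \ref{thm:PBH} collapses to the single summand indexed by $\bar\lambda$, so that (ii) reduces to
\begin{equation*}
\mathrm{Im}\, C^\top \cap \left(\ker(\bar\lambda I - A^\top)\cap \ker B^\top\right) = \{0\}.
\end{equation*}
Because the standing assumptions of Corollary \ref{cor:PBH} (namely $p<n$, $C$ full row-rank, and $\mathcal{N}\cap\ker B^\top \subseteq \mathrm{Im}\,C^\top$) are exactly those under which Theorem \ref{thm:PBH} applies, output controllability is equivalent to this single-eigenvalue intersection condition. It then only remains to convert this condition into the rank identity \eqref{eq:PBH-output2}.

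For this final step I would argue by transposition. Writing $V := \ker(\bar\lambda I - A^\top)\cap \ker B^\top = \ker\left[\begin{smallmatrix}\bar\lambda I - A^\top \\ B^\top\end{smallmatrix}\right]$, I observe that
\begin{equation*}
\left(C[\bar\lambda I - A\ \ B]\right)^\top = \begin{bmatrix}\bar\lambda I - A^\top \\ B^\top\end{bmatrix} C^\top,
\end{equation*}
whose kernel is exactly $\{y : C^\top y \in V\}$. Since $C$ has full row-rank, $C^\top$ is injective, and a brief set-theoretic check shows $\{y : C^\top y \in V\} = \{0\}$ if and only if $\mathrm{Im}\,C^\top \cap V = \{0\}$. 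Hence the $p\times(n+m)$ matrix $C[\bar\lambda I - A\ \ B]$ has full row-rank $p$ exactly when the intersection condition holds, which closes the chain of equivalences.

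I expect the main obstacle to be bookkeeping rather than conceptual. The two places to handle with care are (a) the reduction of the direct sum, which invokes the standard PBH controllability test — a classical fact external to Theorem \ref{thm:PBH} — to discard the controllable eigenvalues, and (b) the transpose/injectivity conversion, where one must track dimensions and the roles of $\mathrm{Im}\,C^\top$ versus $\ker C^\top$ without slips. Neither step presents a genuine difficulty, so the argument should be quite short.
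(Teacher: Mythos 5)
Your argument is correct and is essentially the paper's own proof entered through condition (ii) of Theorem~\ref{thm:PBH} rather than condition (iii): you collapse the direct sum to the single summand $\ker(\bar\lambda I-A^{\top})\cap\ker B^{\top}$ via the classical PBH test, while the paper collapses the block-diagonal rank condition involving $K_C$, and both routes then convert $\mathrm{Im}\,C^{\top}\cap\bigl(\ker(\bar\lambda I-A^{\top})\cap\ker B^{\top}\bigr)=\{0\}$ into $\mathrm{rank}\,C[\bar\lambda I-A\ \ B]=p$ by the same transposition-and-injectivity step. There are no gaps; the reduction of the controllable summands and the final rank conversion are both sound.
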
 
\begin{proof}
If $\bar \lambda$ is the only uncontrollable eigenvalue of the system, then $\mathrm{rank}\begin{bmatrix}\lambda I-A & B\end{bmatrix}<n$ if and only if $\lambda=\bar\lambda$.  Thus, condition (iii) of  Theorem \ref{thm:PBH} reduces to 
\begin{align*}
&\mathrm{rank}\begin{bmatrix}\bar{\lambda} I-A & B & K_{C}\end{bmatrix} = n
\!\Leftrightarrow\!\ker\begin{bmatrix}\bar{\lambda} I-A^{\top} \\ B^{\top} \\ K_{C}^{\top}\end{bmatrix}=\{0\} \\
 &\Leftrightarrow \!\mathrm{Im}\, C^{\top} \!\cap \!\ker\begin{bmatrix}\bar{\lambda} I-A^{\top} \\ B^{\top}\end{bmatrix}\!=\!\{0\}
\Leftrightarrow\! \ker\!\begin{bmatrix}\bar{\lambda} I-A^{\top} \\ B^{\top}\end{bmatrix}C^{\top}\!=\!\{0\}\\
&\Leftrightarrow \mathrm{rank}\, C [\bar\lambda I-A\ \ B]=p,
\end{align*}~
where we used $\ker K_{C}^{\top}=\mathrm{Im}\, C^{\top}$.
\end{proof}

 \bibliographystyle{IEEEtran}
 \bibliography{IEEEabrv,bib-latex.bib}
\end{document}